\documentclass[12pt,reqno]{amsart}

\usepackage{amssymb}
\usepackage{amsmath}
\usepackage{amsthm}
\usepackage{enumerate}
\usepackage{graphicx}
\usepackage{color}
\usepackage{mathrsfs}
\usepackage[hidelinks,hypertexnames=false]{hyperref}

\theoremstyle{plain}
\newtheorem{thm}{Theorem}[section]
\newtheorem*{thmA}{Theorem A}
\newtheorem{lem}[thm]{Lemma}

\newtheorem{de}{Definition}[section]
\theoremstyle{remark}
\newtheorem{re}{Remark}[section]
\theoremstyle{plain}
\newtheorem{question}{Question}[section]

\numberwithin{equation}{section}

\begin{document}

\title[Prescribed Beurling densities in spectral eigen-subspaces]
{Prescribed Beurling densities in spectral eigen-subspaces}

\author{Zheng-Yi Lu}

\address{School of Mathematics, Hunan University, Changsha Hunan 410082, China}

\email{zyluhnsd@163.com, zyluzky@amss.ac.cn}

\keywords{self-similar measure, spectral eigenvalue, Beurling dimension, Beurling density, spectrum}
\subjclass[2010]{Primary 28A80; Secondary 42C05.}
\thanks{The research is supported in part by the NNSF of China (Nos. 12501111, 11831007 and 12071125), the
Hunan Provincial NSF (No. 2025JJ60065), the Fundamental Research Funds for the Central Universities.}
\begin{abstract}
Let $(q,D,L)$ be a one-dimensional Hadamard triple with
$2\leq m:=\#D<q$, and let $\mu_{q,D}$ be the associated self-similar
probability measure. Write
$\mathcal T=\{t\in\mathbb Z:(q,D,tL)\text{ is a Hadamard triple}\}$.
We prove that, for every $0<s<\log m/\log q$ and every
$\zeta\in[0,+\infty]$, there are continuum many spectra $\Lambda$ of
$\mu_{q,D}$ such that $t\Lambda$ is a spectrum for every $t\in\mathcal T$,
$\dim_{\mathrm{Be}}\Lambda=s$, and $D_s^+(\Lambda)=\zeta$.
The construction combines a nested family of finite digit sets with
lacunary spectral extensions. A two-sided interval-counting estimate
controls the density of the prescribed subset, while the added frequencies
ensure completeness for every multiplier without changing that density.
Binary choices in the extension yield continuum many distinct spectra.
This refines Beurling dimension interpolation in spectral eigen-subspaces
by prescribing the upper density at every strictly intermediate dimension.
\end{abstract}
\maketitle

\section{Introduction}
The classical Fourier series on an interval expresses a square-integrable
function as a superposition of mutually orthogonal exponentials.
The spectral set problem asks which other domains admit such an
expansion and how the geometry of a domain is reflected in its
frequencies. A measurable set $K\subset\mathbb R^d$ with
$0<|K|<\infty$ is called a \emph{spectral set} if there exists a
countable set $\Lambda\subset\mathbb R^d$ such that
\[
 \{|K|^{-1/2}e^{2\pi i\langle\lambda,x\rangle}:\lambda\in\Lambda\}
\]
is an orthonormal basis of $L^2(K)$. It is a \emph{translational tile}
if there is a discrete set $\mathcal A\subset\mathbb R^d$ satisfying
\[
                   \sum_{a\in\mathcal A}\mathbf1_K(x-a)=1
                       \quad\text{for almost every }x\in\mathbb R^d.
\]
Thus the translates cover the ambient space without overlaps of
positive measure. Fuglede's conjecture \cite{Fug74}, proposed in
1974, asserts that spectral sets and translational tiles are exactly
the same sets. For a fundamental domain of a lattice, the dual
lattice provides the familiar exponential basis, illustrating the
connection between the two properties.

The conjecture stimulated extensive interaction among Fourier
analysis, discrete geometry, number theory, and finite-group
combinatorics. Tao \cite{Tao04} constructed spectral sets that do not
tile in dimensions at least five. Kolountzakis and Matolcsi
\cite{KM06} constructed tiles without spectra; further
counterexamples in dimension four were obtained in
\cite{FR06,Ma05}. These developments show that neither implication
holds for unrestricted sets in general. Geometric restrictions,
however, can recover a precise correspondence. For example,
{\L}aba \cite{La01} established the conjecture for unions of two
intervals, and Lev and Matolcsi \cite{LM22} proved it for convex
bodies in every dimension. These results highlight the role of geometric and arithmetic structure
in the relation between spectrality and tiling.

The measure-theoretic formulation extends this question beyond
sets of positive Lebesgue measure. A Borel probability measure
$\mu$ on $\mathbb R$ is called a \emph{spectral measure} if there is
a countable set $\Lambda\subset\mathbb R$ such that
\[
 E(\Lambda)=\{e_\lambda(x)=e^{2\pi i\lambda x}:\lambda\in\Lambda\}
\]
is an orthonormal basis for $L^2(\mu)$. Such a set $\Lambda$ is called
a \emph{spectrum} of $\mu$. Normalized Lebesgue measure on a spectral
set fits this definition. A singular measure, in contrast, is
concentrated on a set of Lebesgue measure zero, so the existence of
an exponential basis cannot be explained by the usual volume-based
tiling picture.

Jorgensen and Pedersen \cite{JP98} showed that this distinction does
not prevent Fourier expansions. The self-similar measure with
contraction $1/4$ and digits $\{0,2\}$ is spectral, with spectrum
\[
 \left\{\sum_{j=0}^{n-1}\varepsilon_j4^j:
              n\geq1,\ \varepsilon_j\in\{0,1\}\right\},
\]
whereas the standard middle-third Cantor measure is not spectral.
These examples demonstrate that self-similarity alone does not
determine spectrality: the arithmetic relation between the
contraction and the digit set is essential. They led to systematic
studies of self-similar, self-affine, and infinite-convolution
measures, including the spectrality results in
\cite{AFL19,AL23,Dai12,DC21,DHL19,LW02,LMW22,LLZ23}. 

Singular spectral measures are therefore natural objects for
studying Fourier analysis on fractal supports. An exponential basis
gives coefficients, Parseval identities, and $L^2$ expansions for
functions on such supports, while questions about convergence of
the corresponding Fourier series lead to additional analytic
problems; see \cite{DHS14,Str00,Str06}. At the same time, their
spectra connect the small-scale structure of a measure with the
large-scale distribution of a discrete frequency set. In the
Lebesgue setting, Landau's density theory \cite{Lan67} ties sampling
and interpolation to the volume of the underlying domain.
For singular measures, Beurling dimension describes the polynomial
growth of frequencies; see the study of exponential frames in
\cite{DHSW11}.

For a fixed spectral measure, one may further ask which growth
exponents and densities its spectra can attain, and which dilations
preserve spectrality. We study these questions simultaneously for
singular self-similar measures: we prescribe both the Beurling dimension
and the upper density while requiring a family of integer dilates
to remain spectra.

We consider self-similar probability measures on the line. Let
$q\geq2$ be an integer and let $D\subset\mathbb Z$ be finite. The
associated measure is the unique probability measure satisfying
\begin{equation}\tag{1.1}\label{(1.1)}
 \mu_{q,D}(A)=\frac1{\#D}\sum_{d\in D}\mu_{q,D}(qA-d)
 \qquad(A\subset\mathbb R\text{ Borel});
\end{equation}
see \cite{Hut81}. Equivalently,
$\mu_{q,D}=\mathop{*}_{j=1}^{\infty}\delta_{q^{-j}D}$, where
$\delta_F=(\#F)^{-1}\sum_{x\in F}\delta_x$ for a finite nonempty set $F$.
A useful algebraic condition for spectrality is the following.

\begin{de}\label{de1.1}
Let $q\geq2$ be an integer and let $D,L\subset\mathbb Z$ satisfy
$\#D=\#L=m$. The triple $(q,D,L)$ is a \emph{Hadamard triple} if
\[
 \frac1{\sqrt m}\left(e^{2\pi i d\ell/q}\right)_{d\in D,\,\ell\in L}
\]
is unitary.
\end{de}

For one-dimensional Hadamard triples, spectrality of the associated
self-similar measures was established in \cite{LW02}. The
higher-dimensional Hadamard triple theorem of Dutkay, Haussermann,
and Lai \cite{DHL19} establishes spectrality for the corresponding
self-affine measures. Throughout this
paper we assume
\[
                  2\leq m:=\#D=\#L<q,
 \qquad r_0:=\frac{\log m}{\log q}.
\]
The existence of a spectrum leaves considerable freedom in the
choice of its frequencies. One aspect of this freedom concerns
\emph{spectral eigenvalues}: numbers $t$ for which a spectrum and its
dilate $t\Lambda$ are both spectra of the same measure. This problem
has been studied in \cite{Dai16,DH16,FHW18,HTW19,KLW25}.
For a set $\mathcal A$ of nonzero real numbers containing $1$, write
\[
 V(\mu,\mathcal A)=\{\Lambda:\ a\Lambda\text{ is a spectrum of }\mu
                         \text{ for every }a\in\mathcal A\}.
\]
We call this collection a \emph{spectral eigen-subspace}; the term
does not imply a vector-space structure. The distinct spectra constructed
below all have minimum zero and are therefore inequivalent under translation.

The multiplier set relevant to a Hadamard triple is
\begin{equation}\tag{1.2}\label{(1.2)}
 \mathcal T=\{t\in\mathbb Z:(q,D,tL)\text{ is a Hadamard triple}\}.
\end{equation}
Here $1\in\mathcal T$, $0\notin\mathcal T$, and
$\mathcal T=-\mathcal T$. Deng, Fu and Kang \cite{DFK24} studied common
spectra for the odd integer multipliers of the quarter Cantor measure.
The results of \cite{Lu26} establish common spectra and Beurling
dimension interpolation for the multiplier set \eqref{(1.2)}.

To describe the distribution of frequencies more precisely, put
$B(x,h)=(x-h,x+h)$ and define, for $s>0$,
\[
 M_\Lambda(h)=\sup_{x\in\mathbb R}\#(\Lambda\cap B(x,h)),
 \qquad
 D_s^+(\Lambda)=\limsup_{h\to\infty}\frac{M_\Lambda(h)}{h^s}.
\]
The denominator is the $s$th power of the \emph{radius}; this convention
is used consistently throughout. The Beurling dimension is
\[
 \dim_{\mathrm{Be}}\Lambda
 =\inf\{u>0:D_u^+(\Lambda)=0\}
 =\sup\{u>0:D_u^+(\Lambda)>0\},
\]
with the usual convention for dimension zero. Beurling dimension
quantifies polynomial growth, whereas density at that dimension
retains information that the exponent alone does not record.
Dimension estimates for spectra of self-similar measures were studied
in \cite{DHSW11,HKTW18}; see also the entropy bound in \cite{Shi21}.
Constructions of sparse spectra and dimension interpolation appear
in \cite{AL231,LW23,LW24}.

\begin{thmA}[\cite{Lu26}]
Let $(q,D,L)$ be a one-dimensional Hadamard triple with $2\leq m<q$.
For every $s\in[0,r_0]$, the collection
\[
 V^{(s)}(\mu_{q,D},\mathcal T)
 =\{\Lambda\in V(\mu_{q,D},\mathcal T):
                       \dim_{\mathrm{Be}}\Lambda=s\}
\]
has cardinality $2^{\aleph_0}$.
\end{thmA}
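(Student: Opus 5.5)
Since Theorem A is quoted from \cite{Lu26}, the plan is to reconstruct its proof: build, for each $s\in[0,r_0]$, a family of continuum many spectra $\Lambda$ of $\mu_{q,D}$ with $t\Lambda$ a spectrum for every $t\in\mathcal T$ and $\dim_{\mathrm{Be}}\Lambda=s$.

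\textbf{Step 1: reduction to a digit-expansion spectrum.} Following {\L}aba--Wang \cite{LW02}, I will normalise $0\in D\cap L$ and, after translation and reduction modulo $q$, take $L\subset\{-(q-1),\dots,q-1\}$. Then every set of the form
\[
\Lambda=\Bigl\{\sum_{j=0}^{n-1}\ell_j q^j:\ n\ge1,\ \ell_j\in L_j\Bigr\}
\]
is a candidate spectrum, where each $L_j$ is a complete residue set for $L$ modulo $q$. That is, $(q,D,L_j)$ is Hadamard and $L_j\equiv L \pmod q$ (elementwise). I will also allow lacunary modifications in the spirit of \cite{AL231,LW23,LW24}: at a sparse set of levels $n_k$, the digit $\ell\in L$ is replaced by $\ell+q c_k$ with a large $c_k$.

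Orthogonality follows exactly as in the classical proof. The mask $m_D(\xi)=m^{-1}\sum_{d}e^{2\pi i d\xi}$ vanishes at $(\ell-\ell')/q$ for distinct $\ell,\ell'$ in a complete residue set, and $m_D$ is $1$-periodic. Completeness is the delicate part. I will use the Jorgensen--Pedersen criterion $Q(\xi)=\sum_\lambda|\hat\mu(\xi+\lambda)|^2\equiv1$, together with the known sufficient condition of {\L}aba--Wang/Dai. That condition says the tree of digits has no nontrivial $m_D$-cycle, which holds as soon as the digits eventually include $0$ along every branch, provided the digit sizes are controlled.

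For the multipliers, note that $t\in\mathcal T$ means $(q,D,tL)$ is Hadamard. Hence $t\Lambda$ is again of the same digit form with $tL_j$ in place of $L_j$, and orthogonality is automatic. Completeness for $t\Lambda$ needs the no-cycle condition for digit sets of size up to $|t|q$. Since $\mathcal T$ is infinite in general, I will handle all $t$ simultaneously by making the zero-digit insertions frequent enough along every branch. Cycles of $\mu_{q,D}$ for the dilated tree live in a bounded range, roughly $|t|\max|L|/(q-1)$, so a path through the digit $0$ at a long enough scale escapes every such cycle.

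\textbf{Step 2: tuning dimension.} For $s=r_0$ the standard spectrum already works, and its dimension is $r_0$ by the computations of \cite{DHSW11}. For $s<r_0$, I will dilate the gaps: at a set of levels $n_k$ with density controlled by $s/r_0$, shift digits by $q^{N_k}$-sized blocks. The effect is that the points are spread out and $M_\Lambda(h)\approx h^{s}$ along scales $h=q^{n}$. In this count, nearby points come only from the low-level expansion, while the high, spread levels contribute essentially one point per ball. For $s=0$, the lacunary blow-up is taken superpolynomial.

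\textbf{Step 3: continuum many, and the main obstacle.} Binary choices at infinitely many lacunary levels (whether or not to perform a given shift) produce $2^{\aleph_0}$ distinct sets, all with minimum $0$ and hence pairwise inequivalent. The main obstacle is the uniformity of completeness across all $t\in\mathcal T$ while the digits are modified. I would first try to show that the modifications occur only at levels where the shifted digit is $\equiv$ its original value modulo $q$. For any fixed $t$, only finitely many levels then have digits exceeding the cycle-escape bound in relative terms. Completeness would then follow from the Dai--He--Lai type criterion: $Q_t\equiv1$ provided $\inf$ over the tail of $|\hat\mu|$ along the shifted points stays bounded below.
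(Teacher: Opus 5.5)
\textbf{There is a genuine gap: completeness of $t\Lambda$ for all $t\in\mathcal T$ at once.} This is the whole content of Theorem A, and your Step 1 mechanism for it does not work. Zero digits do not let branches ``escape'' the cycles of $tL$. In fact the standard spectrum is never a common spectrum, so your treatment of $s=r_0$ already fails.

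Here is a counterexample. Take $t=(q-1)^2$ (note $q\geq3$). Since $t\equiv1\pmod q$, we have $tL\equiv L\pmod q$, so $t\in\mathcal T$. Fix $\ell_1\in L\setminus\{0\}$ and put $x=(q-1)\ell_1$, so that $x+t\ell_1=qx$. Write $\lambda\in t\Lambda(q,L)$ as $\lambda=t\ell+q\lambda'$ with $\ell\in L$ and $\lambda'\in t\Lambda(q,L)$.
\begin{itemize}
\item If $\ell=\ell_1$, then $(x+\lambda)/q=x+\lambda'\in\mathbb Z$, so $\widehat\mu(x+\lambda)=m_D(x+\lambda')\widehat\mu(x+\lambda')=\widehat\mu(x+\lambda')$.
\item If $\ell\neq\ell_1$ (this includes $\lambda=0$, written with $\ell=0$), then $m_D\bigl((x+\lambda)/q\bigr)=m_D\bigl(t(\ell-\ell_1)/q\bigr)=0$.
\end{itemize}
By induction on the number of digits, $\widehat\mu(x+\lambda)=0$ for every $\lambda\in t\Lambda(q,L)$. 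Hence $e_{-x}$ is orthogonal to $E(t\Lambda(q,L))$, although every frequency there has eventually zero digits.

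The same computation hurts the tail criterion you invoke at the end (this is Theorem~\ref{thm2.3}). The quantity $\inf_\lambda|\widehat\mu(tq^{-b}\lambda)|$ is arbitrarily small at any level $b$ preceded by a long run of unmodified levels, because $tq^{-b}\lambda$ then comes close to the zero $x$ of $\widehat\mu$. Your replacements $\ell\mapsto\ell+qc_k$ apply the same shift at every node of a level. You give no argument that such level-uniform shifts keep all the points $tq^{-b}\lambda$ away from the zero set of $\widehat\mu$ for infinitely many $t$ simultaneously. Your remark that the shifted digits are congruent modulo $q$ only re-proves orthogonality. A further warning sign is that $m<q$ is never used, although the conclusion fails for $m=q$ (see the remark after Theorem~\ref{thm1.1}).

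\textbf{What is missing is a node-dependent correction.} Sections~\ref{sect.2}--\ref{sect.3} of the paper (built on the selection lemma of \cite{Lu26}) supply it as follows.
\begin{itemize}
\item Each node $(P,J)$ at stage $k$ receives its own carry $q^{m_k}c_k(P,J)$. Lemma~\ref{thm3.3} shows that arbitrary integer carries at scale $q^{m_k}$, with zero carry on zero suffixes, keep $t\Lambda_k$ a spectrum of $\mu_{m_k}$ for every $t\in\mathcal T$ simultaneously.
\item The carry is taken from the selection Lemma~\ref{thm2.4} (via Lemma~\ref{thm2.5}) at the node's own point $x=a_kq^{-n_k}T_{n_k}(J)$, plus a lacunary term $q^{r}b_{a_k}$. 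This yields $|\widehat\mu(a_kq^{-m_k}\lambda)|\geq\epsilon_{a_k}^2$ for every frequency at stage $k$.
\item A recurrent enumeration $(a_k)$ of $\mathcal T_+$ then lets Theorem~\ref{thm2.3} be applied to each fixed $t$ along the infinitely many stages with $a_k=t$.
\item The hypothesis $m<q$ enters through Lemma~\ref{thm2.4} and the normalization $L\subset\{0,\dots,q-2\}$ of Lemma~\ref{thm3.1}.
\end{itemize}

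Dimension is then controlled not by spreading digits at a positive proportion of levels, as in your Step 2, but by a prefix-closed core $\Lambda(\mathcal J)$ separated by zero blocks, which the corrections leave untouched. The added frequencies form a $q$-lacunary set with $O(\log h)$ points per ball (Lemma~\ref{thm3.5}), so they change no $D^+_u$ with $u>0$. A binary choice at the node $0^{m_k-1}1$ gives $2^{\aleph_0}$ distinct spectra. Your Step 2 spreading would in any case have to be reconciled with completeness for every $t$, which the proposal does not attempt.
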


This result prescribes the growth exponent. It does not prescribe the
upper density at that exponent: a set of dimension $s$ may have zero,
finite positive, or infinite upper $s$-Beurling density. We therefore
ask the following finer question.

\begin{question}\label{qu1}
For $0<s<r_0$ and $\zeta\in[0,+\infty]$, does
\begin{equation}\tag{1.3}\label{(1.3)}
 V^{(s)}_\zeta(\mu_{q,D},\mathcal T)
 =\{\Lambda\in V(\mu_{q,D},\mathcal T):
       \dim_{\mathrm{Be}}\Lambda=s,\ D_s^+(\Lambda)=\zeta\}
\end{equation}
have cardinality $2^{\aleph_0}$?
\end{question}

Our main result answers Question \ref{qu1} affirmatively.
\begin{thm}\label{thm1.1}
Let $(q,D,L)$ be a one-dimensional Hadamard triple with
$2\leq m:=\#D<q$. Let $\mu_{q,D}$, $\mathcal T$, and
$V^{(s)}_\zeta(\mu_{q,D},\mathcal T)$ be defined by
\eqref{(1.1)}, \eqref{(1.2)}, and \eqref{(1.3)}, respectively.
For every $0<s<\log m/\log q$ and $\zeta\in[0,+\infty]$,
\[
                 \#V^{(s)}_\zeta(\mu_{q,D},\mathcal T)=2^{\aleph_0}.
\]
The spectra may be chosen in $\mathbb Z_{\geq0}$ with minimum zero.
\end{thm}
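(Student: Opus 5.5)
The plan is to build each spectrum as a union $\Lambda=A\cup E$. Here $A$ is a \emph{prescribed subset} that carries the whole density and dimension. The set $E$ is a very sparse \emph{lacunary extension} that restores completeness for every multiplier $t\in\mathcal T$ at once. After translating $D$ and $L$ we may assume $0\in D\cap L$. We use the standard $q$-adic tree description of candidate spectra:
\[
\Lambda=\bigcup_{n\ge1}\Big\{\sum_{j=0}^{n-1}\ell_jq^j:\ \ell_j\in L_j\Big\},
\]
where each $L_j$ is either $L$ or a subset of $L$ containing $0$. Orthogonality of $E(t\Lambda)$ in $L^2(\mu_{q,D})$ holds automatically for every $t\in\mathcal T$. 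Indeed, $(q,D,tL)$ is a Hadamard triple, and every difference of two points of $t\Lambda$ has the form $q^k t(\ell-\ell')+q^{k+1}(\cdots)$ with $\ell\ne\ell'\in L$. This is therefore a zero of $\widehat{\mu_{q,D}}$, by the factorization $\widehat{\mu_{q,D}}(\xi)=\prod_{j\ge1} m_D(q^{-j}\xi)$. For this step I will take as given the orthogonality and completeness machinery underlying Theorem A from \cite{Lu26}. The essential point is a Łaba–Wang/Dutkay–Jorgensen type criterion: a tree-generated orthogonal set is complete as soon as, for each node, the "all-zero tail" branches are eventually replaced by admissible branches in a controlled way, with no invariant cycles of the associated maximal mapping.

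\textbf{Step 1 (the density-carrying set $A$).} Choose a set $S\subset\mathbb Z_{\ge0}$ of "free" levels, with $L_j=L$ for $j\in S$ and $L_j=\{0\}$ otherwise. Put $k(n)=\#(S\cap[0,n))$. Then $A\cap[0,q^n)$ has exactly $m^{k(n)}$ points, and these are arranged self-similarly. A two-sided interval-counting estimate then gives constants $c_1,c_2$ depending only on $q,L$ such that, for $q^{n-1}\le h<q^n$,
\[
c_1\,m^{k(n-1)}\le M_A(h)\le c_2\,m^{k(n)},
\]
and the upper bound is in fact sharp up to the position of $h$ inside $[q^{n-1},q^n)$. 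We choose $S$ greedily so that $m^{k(n)}\approx \phi(n)q^{ns}$. This is possible because $s<r_0$, so free levels have to be inserted only with frequency about $s/r_0<1$. The three cases are:
\begin{itemize}
\item for $\zeta=\infty$, take $\phi(n)=n$;
\item for $\zeta=0$, take $\phi(n)=1/n$;
\item for $0<\zeta<\infty$, take $\phi$ bounded.
\end{itemize}
In every case $\dim_{\mathrm{Be}}A=s$. For finite positive $\zeta$ the constant is tuned so that $\limsup M_A(h)/h^s=\zeta$ exactly. The cleanest way to do this is to insert, along a sparse sequence of scales, a single block whose count attains $\zeta h^s$ to within $1+o(1)$, while the generic scales stay strictly below. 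This calibration is where the sharp two-sided estimate is needed.

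\textbf{Step 2 (lacunary extension and multipliers).} The set $A$ is usually incomplete, because the zero digits at the non-free levels prune the tree. We repair this by adding, for each pruned node, extra branches that use full digit sets $L$ only at levels $N_1<N_2<\cdots$ that grow super-exponentially (for instance $N_{i+1}\ge 2^{N_i}$). This produces $E$, and we need three things from it.
\begin{itemize}
\item Every finite-level node acquires infinitely many admissible descendants avoiding the all-zero cycle. This gives completeness of $E(t\Lambda)$ for every $t\in\mathcal T$ simultaneously, since the criterion depends only on the digit pattern and on $tL$ being a spectrum of $\delta_{q^{-1}D}$.
\item $\#(E\cap B(x,h))=o(h^s)$ uniformly in $x$. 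This holds because between consecutive lacunary levels $E$ contributes only $O(\#\text{nodes at level }N_i)$ points spread over scales $q^{N_{i+1}}$.
\item Consequently $D_s^+(\Lambda)=D_s^+(A)=\zeta$ and $\dim_{\mathrm{Be}}\Lambda=s$.
\end{itemize}
All points are nonnegative integers, and $0\in\Lambda$ is the minimum.

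\textbf{Step 3 (continuum many).} At each lacunary level $N_i$ we make a binary choice: which nonzero element of $L$ (or which of two admissible positions) to use for the repairing branch. Distinct $0$–$1$ sequences give distinct sets, and since all have minimum $0$ they are inequivalent. None of the estimates in Steps 1–2 depends on the choices, so this yields $2^{\aleph_0}$ elements of $V^{(s)}_\zeta(\mu_{q,D},\mathcal T)$. The upper bound $2^{\aleph_0}$ is trivial because the spectra are subsets of $\mathbb Z$.

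\textbf{Main obstacle.} The hardest step should be the completeness for \emph{all} $t\in\mathcal T$ simultaneously, which is an infinite set. The issue is that the completeness criterion could in principle involve $t$-dependent cycles of the maximal mapping. I would first try to reduce it to the purely combinatorial statement that every node has descendants with infinitely many nonzero digits, using the fact that the zero set of $m_D$ is the same for every $tL$ up to the factor $t$. If that fails, I would fall back on the explicit construction of \cite{Lu26} and only verify that inserting sparse zero blocks does not destroy it.
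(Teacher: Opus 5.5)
\textbf{Gap 1: every candidate set you build lies inside an orthogonal set.} As you set it up, every frequency is a finite sum $\sum_j\ell_jq^j$ with $\ell_j\in L$. This includes the repair branches in $E$, which use digits of $L$ at the levels $N_i$. So $\Lambda\subseteq\Gamma:=\{\sum_{j<n}\ell_jq^j:\ n\ge1,\ \ell_j\in L\}$.

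Your own orthogonality argument (first differing digit plus the Hadamard property) shows that $t\Gamma$ is orthogonal for every $t\in\mathcal T$. A spectrum cannot be a proper subset of an orthogonal set: if $\gamma\in\Gamma\setminus\Lambda$, then $e_\gamma\perp E(\Lambda)$, contradicting Parseval. So the only spectrum in your class is $\Gamma$ itself, whose Beurling dimension is $r_0>s$ (cf. Lemma~\ref{thm4.2}).

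Concretely, take a pruned node with prefix value $p$ and omitted digit $\ell\neq0$ at level $j$. The point $p+\ell q^j$ is orthogonal to every frequency you keep. That includes your repair point $p+\ell q^j+\ell'q^{N_i}$, since the difference $\ell'q^{N_i}$ lies in $\mathcal Z(\widehat\mu)$. Appending a far nonzero digit therefore never restores completeness.

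The missing idea is to relocate non-core branches by \emph{integer carries}, not by digits from $L$. The paper sets $\lambda_k(PJ,\iota)=\lambda_{k-1}(P,\iota^-)+q^{m_{k-1}}T_{n_k}(J)+q^{m_k}c_k(P,J,\iota)$ with arbitrary integers $c_k$. Every word of $\Omega^{m_k}$ then keeps a frequency in its residue class modulo $q^{m_k}$, so $t\Lambda_k$ is a spectrum of $\mu_{m_k}$ for all $t\in\mathcal T$ (Lemma~\ref{thm3.3}). The large carries $c_k=\eta_{a_k}(x_k(J),\sigma_k)+q^{r_k}b_{a_k}$ produce frequencies that are in general not digit expansions over $L$, and they form a $q$-lacunary set.

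\textbf{Gap 2: your completeness reduction is false, and the exact calibration is not achieved.} Your hoped-for reduction (``every node has descendants with infinitely many nonzero digits,'' independent of $t$) fails. Take $(q,D,L)=(4,\{0,2\},\{0,1\})$. Then $3\in\mathcal T$, $\Gamma$ is a spectrum, and $\mathcal Z(\widehat\mu)=\{4^ka:\ k\ge0,\ a\ \text{odd}\}$. One checks that $3\lambda+1\in\mathcal Z(\widehat\mu)$ for every $\lambda\in\Gamma$, so $e_{-1}\perp E(3\Gamma)$. Thus $3\Gamma$ is incomplete although its tree is full. Combined with Gap 1, your class contains no element of $V(\mu,\mathcal T)$ at all in this example.

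The paper handles infinitely many multipliers differently:
\begin{enumerate}[(i)]
\item It uses a recurrent enumeration $(a_k)$ of $\mathcal T_+$ and treats only $a_k$ at stage $k$.
\item It chooses $m_k$ so that retained and core frequencies satisfy $|\widehat\mu(a_kq^{-m_k}\lambda)|\ge1/2$.
\item It picks the carries via Lemma~\ref{thm2.5} and the factorization in Lemma~\ref{thm3.2}, which give the bound $\epsilon_{a_k}^2$ for new frequencies.
\item It applies Theorem~\ref{thm2.3} along the subsequence $\{k:\ a_k=t\}$.
\end{enumerate}

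For $0<\zeta<\infty$, your estimate $c_1m^{k(n-1)}\le M_A(h)\le c_2m^{k(n)}$ has unspecified constants, so it cannot pin the limsup to exactly $\zeta$. Level-wise product digit sets give block counts that are products of the sizes $\#L_j$, and you give no mechanism that drives the relative error to zero. The paper instead selects an arbitrary integer number $A$ of prefixes: the $A$ smallest elements of $\Gamma_d$, which is not a product set. It combines this with the additive comparison $|\#(F\cap U)-\theta\#(\Gamma_\gamma\cap U)|\le2P$ of Lemma~\ref{thm4.3}. Together these place $\theta z$ within $m\tau/(b+1)$ of the target (Lemma~\ref{thm4.4}).

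Finally, your binary choice ``which nonzero element of $L$'' is empty when $m=2$.
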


The condition $m<q$ cannot be omitted from the theorem as stated.
For $(q,D,L)=(2,\{0,1\},\{0,1\})$, the measure is Lebesgue measure
on $[0,1]$. After translating a spectrum to contain zero, its
orthogonality forces it to be a subset of $\mathbb Z$. Completeness
then forces it to equal $\mathbb Z$, so every spectrum has Beurling
dimension one. Thus the asserted interpolation for $0<s<1$ fails
in this full-digit example.

The proof has two components. First, we construct an increasing
family of digit words whose numerical images have the prescribed
density. Long blocks of zero digits separate consecutive stages; a block of
selected digit words is followed by a block of unrestricted digits.
For each resulting finite set, its interval counts differ from a
fixed proportion of the full digit set by at most twice the number
of selected prefixes. This two-sided estimate permits exact density
calibration. Second, we preserve these frequencies while adding
lacunary frequencies to obtain completeness for every multiplier.
The added frequencies contribute only $O(\log h)$ points to any
interval of radius $h$, so they do not change the upper $u$-Beurling density for any $u>0$. Independent choices among them produce the required
cardinality. The endpoint cases $\zeta=0$ and $\zeta=+\infty$ are
handled with slowly varying positive target densities.

Section \ref{sect.2} gives the Fourier tools. Section \ref{sect.3}
establishes the recursive spectral extension. Section \ref{sect4}
constructs the prescribed-density digit sets and proves
Theorem \ref{thm1.1}.

\section{Preliminaries}\label{sect.2}
We recall the finite Fourier identities and completeness criteria
used in the construction. We state the cited results without proof and then derive the
selection estimates needed for the recursive construction.
We use the convention
$\widehat\mu(\xi)=\int e^{-2\pi i\xi x}\,d\mu(x)$.
Write $\mu=\mu_{q,D}$ and
\[
 \mu_n=\mathop{*}_{j=1}^n\delta_{q^{-j}D},
 \qquad m_D(\xi)=\frac1m\sum_{d\in D}e^{-2\pi i d\xi}.
\]
Then
\begin{equation}\tag{2.1}\label{(2.1)}
 \widehat\mu(\xi)=\prod_{j=1}^{\infty}m_D(q^{-j}\xi)
 =\widehat{\mu_n}(\xi)\widehat\mu(q^{-n}\xi).
\end{equation}
In particular, $\widehat{\mu_n}$ is $q^n$-periodic. A set $\Lambda$
is orthogonal for $\mu$ if and only if
\begin{equation}\tag{2.2}\label{(2.2)}
 (\Lambda-\Lambda)\setminus\{0\}
               \subset\mathcal Z(\widehat\mu),
 \qquad \mathcal Z(f)=\{x:f(x)=0\}.
\end{equation}

The following standard consequences of Hadamard unitarity connect
the finite Fourier matrix with the spectra of the partial convolutions.
\begin{lem}[\cite{LW02}]\label{thm2.1}
Let $q\geq2$ and $D,L\subset\mathbb Z$ with $\#D=\#L=m$.
The following are equivalent:
\begin{enumerate}[(i)]
\item $(q,D,L)$ is a Hadamard triple;
\item the elements of $L$ are distinct modulo $q$, and replacing them
by their residues gives a Hadamard triple;
\item $L$ is a spectrum of $\delta_{q^{-1}D}$;
\item $\sum_{\ell\in L}|m_D((\xi+\ell)/q)|^2=1$ for all $\xi\in\mathbb R$.
\end{enumerate}
Moreover, if $t\in\mathcal T$, then
$t(L+qL+\cdots+q^{n-1}L)$ is a spectrum of $\mu_n$.
\end{lem}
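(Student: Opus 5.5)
We will prove the four equivalences by passing through (iv), and then obtain the final assertion by induction on $n$. \emph{(i)$\Leftrightarrow$(iii).} The functions $e_\ell$, $\ell\in L$, restricted to the $m$-point set $q^{-1}D$ with uniform weights, form an orthonormal system in $L^2(\delta_{q^{-1}D})$ exactly when
\[
\frac1m\sum_{d\in D}e^{2\pi i d(\ell-\ell')/q}=\delta_{\ell\ell'} .
\]
This is the statement that the columns of $m^{-1/2}(e^{2\pi i d\ell/q})$ are orthonormal. Since $\dim L^2(\delta_{q^{-1}D})=m=\#L$, orthonormality is equivalent to being a basis, and for a square matrix orthonormal columns mean unitary.

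\emph{(i)$\Leftrightarrow$(ii).} The entries $e^{2\pi i d\ell/q}$ depend only on $\ell \bmod q$, because $d\in\mathbb Z$. So replacing each $\ell$ by its residue leaves the matrix unchanged. If two elements of $L$ were congruent modulo $q$, two columns would coincide and unitarity would fail. Hence (i) forces the residues to be distinct, and (ii) follows; the converse is immediate.

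\emph{(iii)$\Leftrightarrow$(iv).} Note that $\widehat{\delta_{q^{-1}D}}(\xi)=m_D(\xi/q)$. For a finite orthonormal system $E(L)$ in $L^2(\nu)$ with $\nu=\delta_{q^{-1}D}$, Bessel's inequality applied to $e_{-\xi}$ gives
\[
\sum_{\ell\in L}|\widehat\nu(\xi+\ell)|^2\le 1 ,
\]
with equality for all $\xi$ if and only if $E(L)$ is complete (the Jorgensen--Pedersen criterion; completeness follows because $\{e_\xi\}$ spans the finite-dimensional space). Conversely, assume (iv) holds. Evaluating at $\xi=-\ell_0$ gives $|m_D(0)|=1$, so every term $|\widehat\nu(\ell-\ell_0)|$ with $\ell\ne\ell_0$ must vanish. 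This is orthogonality, and then equality in Bessel's inequality gives completeness.

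\emph{Final assertion.} Let $t\in\mathcal T$, so that $(q,D,tL)$ is a Hadamard triple. Put
\[
L_n=t(L+qL+\cdots+q^{n-1}L).
\]
Using \eqref{(2.1)} for finite products, $\widehat{\mu_n}(\xi)=\prod_{j=1}^n m_D(q^{-j}\xi)$. We argue by induction on $n$; the case $n=1$ is (iii) applied to $tL$.

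For the inductive step, write $\lambda=\ell+q\lambda'$ with $\ell\in tL$ and $\lambda'\in L_{n-1}$. This decomposition gives the factorization
\[
\widehat{\mu_n}(\xi+\lambda)=m_D\big((\xi+\ell)/q\big)\,\widehat{\mu_{n-1}}\big((\xi+\ell)/q+\lambda'\big),
\]
because $m_D(q^{-1}(\xi+\lambda))=m_D(q^{-1}(\xi+\ell)+\lambda')=m_D((\xi+\ell)/q)$ by $1$-periodicity of $m_D$. Summing over $\lambda'$ and applying the inductive hypothesis (iv-type identity for $\mu_{n-1}$) gives
\[
\sum_{\lambda\in L_n}|\widehat{\mu_n}(\xi+\lambda)|^2=\sum_{\ell\in tL}|m_D((\xi+\ell)/q)|^2 ,
\]
and this equals $1$ by (iv) for $tL$.

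We must also check that $L_n$ has $m^n$ distinct elements. This follows from (ii): the elements of $tL$ are distinct modulo $q$, so the base-$q$-type expansions determine the digits uniquely. Since $\#L_n=m^n=\dim L^2(\mu_n)$, the same Bessel argument as in (iii)$\Leftrightarrow$(iv) applies to the discrete measure $\mu_n$. The identity therefore shows that $L_n$ is a spectrum of $\mu_n$.

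The main care point is this last step: distinctness of the sums and the use of periodicity of $m_D$ in the factorization.
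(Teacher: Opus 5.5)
The paper gives no proof of this lemma; it is quoted from \cite{LW02}. Your argument is a correct, self-contained proof along the standard lines.
\begin{itemize}
\item (i)$\Leftrightarrow$(iii) is column orthonormality of the $m\times m$ matrix, read as orthonormality of $E(L)$ in the $m$-dimensional space $L^2(\delta_{q^{-1}D})$.
\item (i)$\Leftrightarrow$(ii) uses that the entries depend only on $\ell \bmod q$.
\item (iii)$\Leftrightarrow$(iv) is the finite Jorgensen--Pedersen criterion, with orthogonality recovered by evaluating at $\xi=-\ell_0$.
\item The final assertion follows by induction from the factorization $\widehat{\mu_n}(\xi+\ell+q\lambda')=m_D((\xi+\ell)/q)\,\widehat{\mu_{n-1}}((\xi+\ell)/q+\lambda')$, where $\lambda'\in\mathbb Z$ and $m_D$ is $1$-periodic. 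You also need injectivity of $(\ell,\lambda')\mapsto\ell+q\lambda'$ to turn the sum over pairs into a sum over the set $L_n$, and you correctly derive it from (ii) for $tL$.
\end{itemize}

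Two small points deserve a line each.

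First, you assert $\#L_n=m^n=\dim L^2(\mu_n)$ without justifying the dimension. This needs the $m^n$ points $\sum_{j=1}^n q^{-j}d_j$ to be distinct. That holds because unitarity of the matrix for $(q,D,tL)$ forces its rows to be distinct, so the elements of $D$ are distinct modulo $q$. Then equality of $\sum_j q^{n-j}d_j$ and $\sum_j q^{n-j}d_j'$ lets you peel off the digits one at a time modulo $q$. (The paper relies on this same atom count in Lemma \ref{thm3.3}.) Your argument does not actually need the count, though. Orthogonality comes from evaluating the identity at $\xi=-\lambda_0$, and completeness from Parseval for every $e_{-\xi}$.

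Second, that completeness step uses that exponentials are total in $L^2$ of a finitely supported measure. A vector orthogonal to every $e_\xi$ yields a finite exponential sum with distinct frequencies vanishing identically in $\xi$, so all its coefficients are zero.
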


For the infinite measure, we will need Fourier lower bounds that
remain valid under small perturbations. The following continuity
property supplies this stability.
\begin{lem}[{\cite[Lemma 2.2]{AFL19}}]\label{thm2.2}
Let $\mu$ be a compactly supported Borel probability measure on
$\mathbb R$. Then $\widehat\mu$ is uniformly continuous on $\mathbb R$.
\end{lem}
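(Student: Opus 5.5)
The plan is to prove a stronger statement directly from the definition: $\widehat\mu$ is in fact Lipschitz on $\mathbb R$. Uniform continuity then follows at once. Since $\mu$ is compactly supported, fix $R>0$ with $\operatorname{supp}\mu\subset[-R,R]$. With the convention $\widehat\mu(\xi)=\int e^{-2\pi i\xi x}\,d\mu(x)$ fixed at the start of this section, for any $\xi,\eta\in\mathbb R$ we have
\[
 |\widehat\mu(\xi)-\widehat\mu(\eta)|
 \le\int_{[-R,R]}\bigl|e^{-2\pi i\xi x}-e^{-2\pi i\eta x}\bigr|\,d\mu(x).
\]

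The only estimate needed is the elementary bound $|e^{i\alpha}-e^{i\beta}|\le|\alpha-\beta|$ for real $\alpha,\beta$. It follows from $e^{i\alpha}-e^{i\beta}=\int_\beta^\alpha ie^{it}\,dt$, or equivalently from $|e^{i\theta}-1|=2|\sin(\theta/2)|\le|\theta|$. Applying it with $\alpha=-2\pi\xi x$ and $\beta=-2\pi\eta x$ bounds the integrand by $2\pi|\xi-\eta|\,|x|\le2\pi R|\xi-\eta|$ on the support. Since $\mu$ is a probability measure, this gives
\[
 |\widehat\mu(\xi)-\widehat\mu(\eta)|\le2\pi R\,|\xi-\eta|
 \qquad(\xi,\eta\in\mathbb R).
\]
Given $\varepsilon>0$, the choice $\delta=\varepsilon/(2\pi R)$ therefore works uniformly in $\xi$.

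There is no real obstacle here. The one point to check is that the bound on the integrand is uniform in $x$, and this is exactly where compact support enters. For a general finite measure, one can instead truncate at $|x|\le R$, choose $R$ so that $\mu(\{|x|>R\})<\varepsilon/4$, and bound the tail by $2\mu(\{|x|>R\})$. This recovers uniform continuity, but without the Lipschitz constant.

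The Lipschitz form is also the version most convenient for the later construction, since it quantifies the stability of Fourier lower bounds under small perturbations. For $\mu=\mu_{q,D}$ the support lies in $[\min D/(q-1),\max D/(q-1)]$, so $R$ may be taken as $\max|D|/(q-1)$.
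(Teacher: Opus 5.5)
Your proof is correct and complete. The paper does not prove this lemma; it quotes it from \cite{AFL19}. Your estimate $|\widehat\mu(\xi)-\widehat\mu(\eta)|\le 2\pi R|\xi-\eta|$, obtained from $|e^{i\theta}-1|\le|\theta|$ on $[-R,R]$, is the standard argument behind such a statement.

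Your Lipschitz form is slightly stronger than what the paper actually uses. In Lemma~\ref{thm2.5} it would let $\delta_t$ be chosen explicitly: any $0<\delta_t<1/4$ with $\delta_t\le\min\{\epsilon_t/(2\pi R),\,1/(8\pi R)\}$ works, where $R=\max|D|/(q-1)$. This replaces the appeal to an unspecified modulus of uniform continuity.
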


Finite-level orthogonality alone does not guarantee completeness
of the limiting system. The next criterion supplements nested finite
spectra with a uniform lower bound for the tail Fourier transform.
It follows from the argument of \cite[Theorem 2.3]{AHH19}, together
with Lemma \ref{thm2.2}, and will be applied along a subsequence
on which a prescribed multiplier is fixed.
\begin{thm}\label{thm2.3}
	Let $\mu:=\mu_{q,D}$ be given as \eqref{(1.1)}, and let $b_j\to\infty$ be strictly increasing positive integers. Suppose
$0\in\Lambda_{b_j}\subset\Lambda_{b_{j+1}}$, that $\Lambda_{b_j}$ is a
spectrum of $\mu_{b_j}$, and that
\[
 \inf_j\inf_{\lambda\in\Lambda_{b_j}}
              |\widehat\mu(q^{-b_j}\lambda)|^2>0.
\]
Then $\bigcup_j\Lambda_{b_j}$ is a spectrum of $\mu$.
\end{thm}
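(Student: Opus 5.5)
The plan is to verify orthogonality and completeness separately, using the Jorgensen--Pedersen criterion: an orthogonal set $\Lambda$ is a spectrum of $\mu$ if and only if
\[
 Q(\xi):=\sum_{\lambda\in\Lambda}|\widehat\mu(\xi+\lambda)|^2\equiv1 .
\]
Put $\Lambda=\bigcup_j\Lambda_{b_j}$.

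\emph{Orthogonality.} Take distinct $\lambda,\lambda'\in\Lambda$. By nestedness they lie in a common $\Lambda_{b_j}$. Since $\Lambda_{b_j}$ is a spectrum of $\mu_{b_j}$, we have $\widehat{\mu_{b_j}}(\lambda-\lambda')=0$. The factorization \eqref{(2.1)}, $\widehat\mu=\widehat{\mu_{b_j}}\,\widehat\mu(q^{-b_j}\cdot)$, then gives $\widehat\mu(\lambda-\lambda')=0$. By \eqref{(2.2)}, $\Lambda$ is orthogonal for $\mu$. Bessel's inequality then yields $Q\le1$.

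\emph{A uniform lower bound.} Since $\Lambda_{b_j}$ is a spectrum of the discrete measure $\mu_{b_j}$, we have $\sum_{\lambda\in\Lambda_{b_j}}|\widehat{\mu_{b_j}}(\xi+\lambda)|^2=1$ for every $\xi$. Let $c$ be the infimum in the hypothesis. By Lemma \ref{thm2.2}, for fixed $\xi$ we have $|\widehat\mu(q^{-b_j}(\xi+\lambda))|^2\ge c/2$ for all $\lambda\in\Lambda_{b_j}$ once $j$ is large, because $q^{-b_j}\xi\to0$. Using \eqref{(2.1)} once more,
\[
 Q(\xi)\ \ge\ \sum_{\lambda\in\Lambda_{b_j}}|\widehat{\mu_{b_j}}(\xi+\lambda)|^2\,
   |\widehat\mu(q^{-b_j}(\xi+\lambda))|^2\ \ge\ c/2 .
\]

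\emph{Upgrading $Q\ge c/2$ to $Q\equiv1$.} This is the step I expect to be the main obstacle, and it is where the argument of \cite[Theorem 2.3]{AHH19} must be reproduced. A bound $Q\ge c/2$ for a merely orthogonal family does not by itself force completeness. The idea I would try first is to study the defect $h=1-Q$, which is continuous, takes values in $[0,1]$, and extends to an entire function of $\xi$.
\begin{itemize}
 \item[(a)] Split the level-$b_j$ identity as $1=\sum_{\lambda\in\Lambda_{b_j}}|\widehat{\mu_{b_j}}(\xi+\lambda)|^2$.
 \item[(b)] Compare each term with $|\widehat\mu(\xi+\lambda)|^2$. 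This gives
 \[
 h(\xi)\le\sum_{\lambda\in\Lambda_{b_j}}|\widehat{\mu_{b_j}}(\xi+\lambda)|^2\bigl(1-|\widehat\mu(q^{-b_j}(\xi+\lambda))|^2\bigr).
 \]
 \item[(c)] Try to bound each factor $1-|\widehat\mu(q^{-b_j}\eta)|^2$ by $(1-c/2)$ times the corresponding defect at the rescaled point $q^{-b_j}\eta$. Self-similarity should make this possible: $\Lambda_{b_j}$ together with later levels realizes $\widehat\mu(q^{-b_j}\cdot)$ as the transform of the tail measure.
\end{itemize}
If step (c) succeeds, it gives $\sup h\le(1-c/2)\sup h$, hence $h\equiv0$.

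If that contraction cannot be made rigorous directly, the fallback is the analytic route: $h$ is entire and nonnegative on $\mathbb R$, and the lower bound above (with the continuity error made uniform on compact sets) shows that $h$ vanishes at the points $-\lambda$ with $\lambda\in\Lambda$. Combining this with the maximum principle, or the argument of \cite{AHH19}, should force $h\equiv0$. Either way, the uniform continuity from Lemma \ref{thm2.2}, applied on the shrinking points $q^{-b_j}\xi$, is what makes the error terms vanish along the subsequence $b_j$.
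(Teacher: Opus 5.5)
Your orthogonality argument, the Bessel bound $Q\le1$, the lower bound $Q\ge c/2$, and inequality (b) are all correct. The gap is the one step that actually carries the theorem: you never pass from these to $Q\equiv1$, and neither of your routes can do it.

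Step (c) would need $\Lambda$ to have a self-similar structure, something like $\Lambda=\Lambda_{b_j}+q^{b_j}\Lambda'$ with $\Lambda'$ tied to the tail measure. The hypotheses do not give this. Read literally, (c) is also false. Because $0\in\Lambda$, the $\lambda=0$ term gives $Q(x)\ge|\widehat\mu(x)|^2$, i.e.\ $h(x)\le1-|\widehat\mu(x)|^2$. So the bound $1-|\widehat\mu(x)|^2\le(1-c/2)h(x)$ would force $|\widehat\mu(x)|=1$.

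The fallback fares no better. The vanishing $h(-\lambda)=0$ for $\lambda\in\Lambda$ follows from orthogonality alone, since $Q(-\lambda)=|\widehat\mu(0)|^2=1$. It holds just as well for incomplete orthogonal sets, such as a spectrum with some points deleted. So no analyticity or maximum-principle argument can extract completeness from it.

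The missing idea is to use the hypothesis in the opposite direction. Do not use it to bound $Q$ from below; use it to dominate the level-$b_j$ weights of new frequencies by $\mu$-weights, and then use that $Q(\xi)\le1$ is a convergent series.

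Fix $\xi$ and $\varepsilon>0$, and choose $j_0$ with $\sum_{\lambda\in\Lambda\setminus\Lambda_{b_{j_0}}}|\widehat\mu(\xi+\lambda)|^2<\varepsilon$. For $j>j_0$, split the right side of (b) into $\lambda\in\Lambda_{b_{j_0}}$ and $\lambda\in\Lambda_{b_j}\setminus\Lambda_{b_{j_0}}$.

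The first part is a sum over a fixed finite set. Its weights are at most one, and its factors satisfy $1-|\widehat\mu(q^{-b_j}(\xi+\lambda))|^2\to1-|\widehat\mu(0)|^2=0$, so it tends to zero.

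On the second part, your uniform-continuity estimate $|\widehat\mu(q^{-b_j}(\xi+\lambda))|^2\ge c/2$ gives, via \eqref{(2.1)},
\[
 |\widehat{\mu_{b_j}}(\xi+\lambda)|^2\le\frac2c\,|\widehat\mu(\xi+\lambda)|^2 .
\]
Hence that part is at most $2\varepsilon/c$. So $h(\xi)\le2\varepsilon/c$ for every $\varepsilon>0$, and $Q\equiv1$.

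This splitting separates old frequencies, whose tail factor tends to one, from new frequencies, which are controlled by the tail of the series $Q(\xi)$. It is exactly what the argument of \cite{AHH19} invoked by the paper supplies. With it your outline closes, and no contraction or analytic continuation is needed.
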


To meet the tail condition in Theorem \ref{thm2.3}, we use the
following selection result from \cite{Lu26}. It supplies two integer
translations at every point of a compact interval, with a positive
lower bound independent of that point.
\begin{lem}[\cite{Lu26}]\label{thm2.4}
Let $\mu:=\mu_{q,D}$ and $\mathcal{T}$ be given as \eqref{(1.1)} and \eqref{(1.2)}, respectively, where $(q,D,L)$ forms a Hadamard triple. 
Assume $2\leq m<q$. For every $t\in\mathcal T\cap\mathbb N$ there are
$\epsilon_t>0$ and an integer $N_t\geq1$ such that, for each
$x\in[0,t]$, there are two integers
$0\leq u_1(x,t)<u_2(x,t)<q^{N_t}$ satisfying
\[
              |\widehat\mu(x+t u_j(x,t))|\geq2\epsilon_t,
                 \qquad j=1,2.
\]
\end{lem}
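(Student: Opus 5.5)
The plan is a compactness argument. First we show that \emph{for each fixed} $x\in[0,t]$ there are at least two integers $u\geq0$ with $\widehat\mu(x+tu)\neq0$. Then we spread this to a neighbourhood of $x$ using Lemma \ref{thm2.2}. Finally we extract a finite subcover of $[0,t]$.

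\textbf{Pointwise step.} Fix $x\in[0,t]$ and put $\omega=1/t$. Let $\nu_x$ be the complex measure $e^{-2\pi i x y}\,d\mu(y)$ pushed forward to the circle $\mathbb T_\omega=\mathbb R/\omega\mathbb Z$. Its Fourier coefficients (in the frequencies $tu$, $u\in\mathbb Z$) are exactly $\widehat\mu(x+tu)$.

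Since $\mu$ has compact support, the projection $\mathbb R\to\mathbb T_\omega$ is finite-to-one on $\operatorname{supp}\mu$. Since $m<q$, the measure $\mu$ is singular, and so $\nu_x$ is a singular measure. If at most one coefficient $\widehat\mu(x+tu)$ were nonzero, then $\nu_x$ would be a constant multiple of an exponential times Haar measure. Singularity then forces $\nu_x=0$, that is, $\widehat\mu(x+tu)=0$ for all $u\in\mathbb Z$.

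To exclude this we use the Hadamard structure. By Lemma \ref{thm2.1}, $t\Lambda_n$ with $\Lambda_n=L+qL+\cdots+q^{n-1}L$ is a spectrum of $\mu_n$. Hence $\sum_{\lambda\in t\Lambda_n}|\widehat{\mu_n}(x+\lambda)|^2=1$. Combining this with \eqref{(2.1)} gives
\[
\sum_{\lambda\in t\Lambda_n}|\widehat\mu(x+\lambda)|^2=\sum_{\lambda\in t\Lambda_n}|\widehat{\mu_n}(x+\lambda)|^2\,|\widehat\mu(q^{-n}(x+\lambda))|^2 .
\]
This is a convex combination of the values $|\widehat\mu|^2$ at the points $q^{-n}(x+\lambda)$. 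We would argue, using iterates of Lemma \ref{thm2.1}(iv) with $L$ replaced by residues chosen near $0$, that this combination cannot vanish. Since $t\Lambda_n\subset t\mathbb Z$, some $\widehat\mu(x+tu)$ is then nonzero, so $\nu_x\neq0$.

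Because $x\in[0,t]$, translating $u$ by a bounded amount changes nothing essential. We obtain two distinct integers $0\leq u_1<u_2$ with $\widehat\mu(x+tu_j)\neq0$; shifting $x$ by a multiple of $t$ would only relabel the $u$'s. I expect this step, ruling out total cancellation $\nu_x=0$, to be the main obstacle. If the convex-combination argument is awkward, I would try instead the zero structure $\mathcal Z(\widehat\mu)=\bigcup_{j\ge1}q^j\mathcal Z(m_D)$ together with the entire-function nature of $\widehat\mu$.

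\textbf{Local and global steps.} Put $\eta_x=\min_j|\widehat\mu(x+tu_j)|>0$. By uniform continuity (Lemma \ref{thm2.2}) there is an open interval $I_x\ni x$ on which $|\widehat\mu(y+tu_j)|\geq\eta_x/2$ for $j=1,2$, with the same $u_j$. Compactness of $[0,t]$ gives finitely many $x_1,\dots,x_k$ whose intervals cover $[0,t]$. We then set
\[
2\epsilon_t=\min_i\eta_{x_i}/2,
\]
and choose $N_t$ so that $q^{N_t}$ exceeds every $u_j(x_i)$ used. For $x\in I_{x_i}$ we define $u_j(x,t)=u_j(x_i)$; this gives the required two integers with the uniform lower bound $2\epsilon_t$.
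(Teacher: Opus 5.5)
\textbf{There is a genuine gap: the nonvanishing step is missing, and the route you sketch for it fails.} The overall structure of your argument is sound. You fold $e^{-2\pi i xy}\,d\mu(y)$ onto $\mathbb R/t^{-1}\mathbb Z$, use the singularity coming from $m<q$ to force a second nonzero coefficient, and then pass to a uniform statement by Lemma \ref{thm2.2} and compactness of $[0,t]$. But the step that carries all the content is not proved: that $x+t\mathbb Z\not\subset\mathcal Z(\widehat\mu)$ for every $x\in[0,t]$. Your sentence ``we would argue \dots\ that this combination cannot vanish'' is only a placeholder. This is exactly where $t\in\mathcal T$ must enter. For $q=4$, $D=\{0,2\}$ one has $\mathcal Z(\widehat\mu)=\{4^ka:k\geq0,\ a\text{ odd}\}$, so $\widehat\mu(1+2u)=0$ for all $u\in\mathbb Z$, and indeed $2\notin\mathcal T$.

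The route you sketch fails even for $t\in\mathcal T$. Keep this measure and take $L=\{0,1\}$, $t=3\in\mathcal T$, $x=1$.
\begin{itemize}
\item For $\lambda=\sum\varepsilon_k4^k\in\Lambda_n$, the base-$4$ digits of $3\lambda$ are $3\varepsilon_k$.
\item Adding $1$ carries through the initial run of ones, giving $1+3\lambda=4^j\cdot(\text{odd})$.
\item Hence $\widehat\mu(1+\lambda')=0$ for every $\lambda'\in3\Lambda_n$ and every $n$, so your convex combination is identically zero.
\end{itemize}
The obstruction is the nontrivial $m_D$-cycle $\{1\}$ of $y\mapsto(y+3)/4$. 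The Parseval identity from Lemma \ref{thm2.1} only sees frequencies in $t\Lambda_n$, and $t\bigcup_n\Lambda_n$ need not be complete. In this example the nonzero values come from $u\notin\Lambda$; for instance $u=3$ gives $\widehat\mu(10)\neq0$. Replacing $L$ by ``residues near $0$'' does not help here, since $L=\{0,1\}$ already is such a set.

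So you need an argument that uses every residue $\ell\in\{0,\dots,q-1\}$. A natural tool is the identity $\widehat\mu(x+t(\ell+qv))=m_D\bigl(\tfrac{x+t\ell}{q}\bigr)\widehat\mu\bigl(\tfrac{x+t\ell}{q}+tv\bigr)$. With it you would need to show that from every $x$ some admissible digit chain escapes such cycles to a neighbourhood of $0$. The paper gives no proof of this lemma to compare with; it imports the statement from the cited work, and this nonvanishing is the substance of what is imported.

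\textbf{A second, smaller gap: the passage to $u\geq 0$.} Even granting $\nu_x\neq0$, the singularity argument only produces two indices $u\in\mathbb Z$, and these may be negative. Since $x$ is fixed, you cannot ``shift $x$ by a multiple of $t$'' or ``translate $u$ by a bounded amount'' without changing the statement. The repair is the F.~and M.~Riesz theorem. A nonzero singular measure on the circle cannot have only finitely many nonzero Fourier coefficients of positive index (nor of negative index). So $\nu_x\neq0$ yields infinitely many $u\geq0$ with $\widehat\mu(x+tu)\neq0$. With that and the missing nonvanishing step supplied, your continuity-and-compactness argument does produce $\epsilon_t$ and $N_t$ as stated.
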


The strict inequality $m<q$ is part of the hypotheses of this
selection result. The constants may depend on the multiplier.
To use the result for every positive multiplier, we arrange that
each one is treated infinitely many times in the construction.

\begin{de}\label{de2.1}
A sequence $(a_k)$ in a countable set $\mathcal A$ is called a
\emph{recurrent enumeration} if each element of $\mathcal A$
occurs infinitely often.
\end{de}
We use a recurrent enumeration of $\mathcal T_+:=\mathcal T\cap\mathbb N$.
Negative multipliers will be recovered by complex conjugation.

We now combine Lemmas \ref{thm2.2} and \ref{thm2.4} into a form
adapted to the recursive construction. Besides stability under
perturbations, we arrange that the first translation is zero near
the origin, so previously chosen frequencies can be retained.
\begin{lem}\label{thm2.5}
			Let $\mu:=\mu_{q,D}$ and $\mathcal{T}$ be given as \eqref{(1.1)} and \eqref{(1.2)}, respectively, where $(q,D,L)$ forms a Hadamard triple. 
For each $t\in\mathcal T_+:=\mathcal{T}\cap\mathbb N$, there are numbers
$0<\epsilon_t<1/2$, $0<\delta_t<1/4$, and an integer $N_t\geq1$
with $tq^{-N_t}<\delta_t/4$ such that there exists an integer-valued map
\[
\eta_t:[0,t]\times\{1,2\}\longrightarrow
\{0,1,\ldots,q^{N_t}-1\}
\]
satisfying
\[
\eta_t(x,1)<\eta_t(x,2)\qquad(0\leq x\leq t)
\]
and 
\[
 |\widehat\mu(x+t\eta_t(x,j)+y)|\geq\epsilon_t
 \quad (|y|<\delta_t,\ j=1,2).
\]
The map and constants can be chosen to satisfy
$\eta_t(x,1)=0$ for $0\leq x\leq\delta_t$ and
\[
|\widehat\mu(v)|\geq\frac12
\qquad\text{whenever }|v|<2\delta_t.
\]
In particular, $b_t:=\eta_t(0,2)$ is a positive integer.
\end{lem}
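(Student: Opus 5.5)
The plan is to start from Lemma \ref{thm2.4} and upgrade its pointwise lower bounds to uniform neighbourhood bounds via the uniform continuity of $\widehat\mu$ (Lemma \ref{thm2.2}). The normalization $\eta_t(x,1)=0$ near the origin will be arranged separately, using $\widehat\mu(0)=1$. Fix $t\in\mathcal T_+$.

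\emph{Step 1: constants and the bound near zero.} By Lemma \ref{thm2.2}, $\widehat\mu$ is uniformly continuous and $\widehat\mu(0)=1$. So we may choose $\delta^{(0)}\in(0,1/4)$ with $|\widehat\mu(v)|\geq 1/2$ for $|v|<2\delta^{(0)}$. Let $\epsilon'_t$ and $N'_t$ denote the constants of Lemma \ref{thm2.4}, and let $u_1(x,t)<u_2(x,t)<q^{N'_t}$ be the integers it supplies.

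Set $\epsilon_t=\min\{\epsilon'_t,1/4\}$, which lies in $(0,1/2)$. By uniform continuity there is $\delta^{(1)}>0$ such that $|\widehat\mu(a)-\widehat\mu(b)|<\epsilon'_t$ whenever $|a-b|<\delta^{(1)}$. Take
\[
\delta_t=\min\{\delta^{(0)},\delta^{(1)}\}/2<1/4 .
\]

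\emph{Step 2: definition of $\eta_t$ away from zero.} Choose $N_t\geq N'_t$ so large that $tq^{-N_t}<\delta_t/4$. This is possible since $q\geq 2$, and enlarging $N_t$ keeps every value below $q^{N_t}$. For $x\in(\delta_t,t]$ put $\eta_t(x,j)=u_j(x,t)$. Then for $|y|<\delta_t<\delta^{(1)}$,
\[
|\widehat\mu(x+t\eta_t(x,j)+y)|\geq 2\epsilon'_t-\epsilon'_t=\epsilon'_t\geq\epsilon_t .
\]

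\emph{Step 3: definition on $[0,\delta_t]$.} This is the only delicate point: we need $\eta_t(x,1)=0$ together with a strictly larger second index carrying a uniform bound. Put $\eta_t(x,1)=0$. For $|y|<\delta_t$ we have $|x+y|<2\delta_t\leq\delta^{(0)}$, so $|\widehat\mu(x+y)|\geq 1/2\geq\epsilon_t$.

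For the second index, apply Lemma \ref{thm2.4} at the point $x$. If $u_1(x,t)>0$, take $\eta_t(x,2)=u_1(x,t)$; otherwise take $\eta_t(x,2)=u_2(x,t)$. In both cases $\eta_t(x,2)\geq 1>0$, the point satisfies $|\widehat\mu(x+t\eta_t(x,2))|\geq 2\epsilon'_t$, and the same continuity argument as in Step 2 gives the bound $\epsilon_t$ on the $\delta_t$-neighbourhood.

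The existence of two distinct indices in Lemma \ref{thm2.4} is exactly what guarantees a nonzero good index. In particular $b_t=\eta_t(0,2)$ is a positive integer. The map takes values in $\{0,\dots,q^{N_t}-1\}$ and satisfies $\eta_t(x,1)<\eta_t(x,2)$ throughout $[0,t]$.

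No measurability of $\eta_t$ is required, since it is just a choice function. The main obstacle is the bookkeeping in Step 3, making the first index zero without losing the strict ordering. This is resolved by the two-index statement of Lemma \ref{thm2.4}, and the remaining requirements follow by taking the minima of the constants.
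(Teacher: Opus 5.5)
Your proof is correct and follows the paper's argument. You perturb the two selections of Lemma \ref{thm2.4} using uniform continuity, overwrite the first selection by $0$ on $[0,\delta_t]$ using $|\widehat\mu|\geq 1/2$ near the origin, and enlarge $N_t$ at the end. The only cosmetic difference is that near zero the paper always takes $\eta_t(x,2)=u_2(x)$ (positive because $u_2>u_1\geq 0$), whereas you first use $u_1$ when it is positive; both choices work.
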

\begin{proof}
Fix $t$. Lemma \ref{thm2.4} supplies a number $\epsilon_t^0>0$,
an integer $N_t^0$, and two integers $u_1(x)<u_2(x)$ such that
$|\widehat\mu(x+tu_j(x))|\geq2\epsilon_t^0$.
Choose $0<\epsilon_t<\min\{\epsilon_t^0,1/4\}$.
By uniform continuity choose $0<\delta_t<1/4$ sufficiently small that
\[
 |\widehat\mu(z+y)-\widehat\mu(z)|<\epsilon_t
       \quad(z\in\mathbb R,\ |y|<\delta_t).
\]
Decrease $\delta_t$ again, if needed, to have
$|\widehat\mu(v)|\geq1/2$ for $|v|<2\delta_t$.
For either of the original choices, the triangle inequality gives
\[
\begin{split}
 |\widehat\mu(x+tu_j(x)+y)|
 &\geq |\widehat\mu(x+tu_j(x))|
       -|\widehat\mu(x+tu_j(x)+y)-\widehat\mu(x+tu_j(x))|\\
 &\geq 2\epsilon_t^0-\epsilon_t\geq\epsilon_t.
\end{split}
\]
For $x>\delta_t$ keep these choices. For $0\leq x\leq\delta_t$
put $\eta_t(x,1)=0$ and $\eta_t(x,2)=u_2(x)$.
The second is positive because $u_2(x)>u_1(x)\geq0$.
The first satisfies the required bound since
$|x+y|<2\delta_t$ and $|\widehat\mu(x+y)|\geq1/2>\epsilon_t$.
Finally enlarge $N_t^0$ to an integer $N_t$ with
$tq^{-N_t}<\delta_t/4$. All selected integers still lie below
$q^{N_t}$, and $b_t=\eta_t(0,2)>0$ as required.
\end{proof}
For each $t\in\mathcal T_+$, we henceforth fix
$\epsilon_t,\delta_t,N_t$, and $\eta_t$ satisfying all
the properties in Lemma \ref{thm2.5}.
\section{Word spaces and spectral extension}\label{sect.3}
This section explains how to extend a prescribed family of digit
frequencies to a spectrum whose dilates by every $t\in\mathcal T$ are also
spectra. We first normalize the digit set and establish the finite-stage
orthogonality and Fourier estimates. We then give a recursive construction
that retains the prescribed frequencies and adds a $q$-lacunary set of
frequencies to ensure completeness. The binary encodings distinguish the
resulting spectra; Section \ref{sect4} will show that the added frequencies
do not affect the prescribed upper $s$-Beurling density for $s>0$.

\begin{lem}\label{thm3.1}
A Hadamard triple with $m<q$ admits a frequency digit set
$L'\subset\{0,\ldots,q-2\}$ containing zero. It has exactly the same
multiplier set \eqref{(1.2)} as the original digit set $L$.
\end{lem}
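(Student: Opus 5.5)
We must find L' ⊂ {0,...,q−2}, 0 ∈ L', with (q,D,L') a Hadamard triple and with the same multiplier set as L.

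The plan is to modify L by one translation and then a reduction modulo q. Both operations preserve the Hadamard property. The point is that they also preserve it for every dilate tL, which is what gives equality of the multiplier sets.

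First, the Hadamard condition is invariant under translating L by an integer. Replacing L by L−ℓ_0 multiplies the column indexed by ℓ by e^{-2\pi i d\ell_0/q}. Equivalently, it multiplies the row indexed by d by the unimodular factor e^{-2\pi i d\ell_0/q}, and this preserves unitarity. The same holds for tL versus t(L−ℓ_0)=tL−tℓ_0. So for every t∈\mathbb Z, (q,D,tL) is Hadamard if and only if (q,D,t(L−ℓ_0)) is. Hence translation does not change \mathcal T.

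Second, reduction modulo q. By Lemma \ref{thm2.1}(ii), the entries e^{2\pi i d\ell/q} depend only on ℓ mod q. The matrix for t(L−ℓ_0) therefore coincides with the matrix for tL', where L' is the set of residues of L−ℓ_0 in \{0,\ldots,q−1\}. These residues are distinct, so \#L'=m. Indeed, the matrix of tL' has entries e^{2\pi i d t\ell'/q}, and tℓ'\equiv t(\ell−\ell_0) \pmod q. Thus (q,D,tL') is Hadamard exactly when (q,D,tL) is, for every integer t, and L' has the same multiplier set \eqref{(1.2)} as L. Note that if distinct ℓ have equal images mod q, then both matrices have two equal columns and neither is unitary, so the equivalence persists even in degenerate cases.

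Third, choose ℓ_0 so that L' contains 0 and avoids q−1. This is the only real step. Since \#L=m<q, some residue class c mod q is not represented by L. Choose ℓ_0 with ℓ_0\equiv c+1 \pmod q; a priori this need not lie in L. Then the residues of L−ℓ_0 miss c−ℓ_0\equiv −1\equiv q−1, so L'\subset\{0,\ldots,q−2\}. However, 0∈L' requires ℓ_0∈L mod q, and the two requirements may conflict.

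To resolve this, I would use a cyclic gap argument on the m residues of L among the q classes on \mathbb Z/q\mathbb Z. Since m<q, there is some ℓ_1∈L whose predecessor ℓ_1−1 is not a residue of L. To see this, take a maximal cyclic run of occupied residues; since not all q classes are occupied, the run has a first element, and that is ℓ_1. Now set ℓ_0=ℓ_1. Then 0=ℓ_1−ℓ_0∈L', and q−1\equiv(ℓ_1−1)−ℓ_0 is not attained. Hence L'\subset\{0,\ldots,q−2\} with 0∈L', and by the previous two steps L' has the same multiplier set as L.
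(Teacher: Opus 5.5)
Your proposal is correct and follows essentially the paper's argument. You pick $\ell_1\in L$ whose cyclic predecessor $\ell_1-1$ is not a residue of $L$, set $L'=(L-\ell_1)\bmod q$, and observe that for every integer $t$ both translation (a unimodular rescaling of the rows of the matrix for $tL$) and reduction modulo $q$ (which leaves the entries unchanged) preserve unitarity in both directions. One slip is cosmetic only: the factor $e^{-2\pi i d\ell_0/q}$ depends on $d$, so it scales rows rather than columns, as your next sentence correctly says.
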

\begin{proof}
By Lemma \ref{thm2.1}, replace every element of $L$ by its residue
in $\{0,\ldots,q-1\}$. The resulting residue set is nonempty and
proper because $1\leq m<q$.

There is an element $c\in L$ whose predecessor $c-1$ modulo $q$
does not belong to $L$. Indeed, if every element had its predecessor
in $L$, starting from one element and repeatedly taking predecessors
would place all $q$ residues in $L$, a contradiction. Define
\[
                 L'=(L-c)\pmod q
\]
using residues in $\{0,\ldots,q-1\}$. The element $c$ produces zero,
whereas $q-1$ cannot occur because that would require $c-1\in L$.
Thus $0\in L'\subset\{0,\ldots,q-2\}$.

Translation by $-tc$ multiplies row $d$ of the matrix
for $(q,D,tL)$ by $e^{-2\pi i dtc/q}$, a scalar of modulus one.
Replacing a translated digit by a congruent digit adds an integer
multiple of $2\pi i$ to every exponent, leaving the entries unchanged.
Both operations preserve unitarity in both directions. For every
integer $t$, $(q,D,tL)$ is therefore Hadamard if and only if
$(q,D,tL')$ is Hadamard. This proves equality of the multiplier sets.
\end{proof}
Henceforth use this normalization and write
\[
 L=\{0=l_0<l_1<\cdots<l_{m-1}\}\subset\{0,\ldots,q-2\},
 \qquad\Omega=\{0,\ldots,m-1\}.
\]
For $J=j_1\cdots j_n\in\Omega^n$, put
\[
 T_n(J)=\sum_{r=1}^n l_{j_r}q^{r-1}.
\]
Words are read from the least significant digit. For example, for
$q=4$ and $L=\{0,1\}$, the word $101$ has value
$T_3(101)=1+0\cdot4+1\cdot4^2=17$, whereas $T_3(110)=5$.
Thus a prefix specifies the low digits, and a suffix specifies the
high digits; deleting a suffix removes the most significant digits.
Appending zero digits does not change the numerical value of a word:
$T_{n+r}(J0^r)=T_n(J)$ for $r\geq0$. We set $T_0(\emptyset)=0$.

The next lemma provides the Fourier lower bounds used to prove
completeness. We use $\eta_t$, $b_t$, $\epsilon_t$, and $\delta_t$
from Lemma \ref{thm2.5}.
\begin{lem}\label{thm3.2}
For $t\in\mathcal T_+$, $J\in\Omega^n$, and $i\in\{1,2\}$, put
\begin{equation}\tag{3.1}\label{(3.1)}
 \psi_n^{(t)}(J,i)=T_n(J)+q^n\eta_t\bigl(tq^{-n}T_n(J),i\bigr).
\end{equation}
Then $\psi_n^{(t)}(J,i)\equiv T_n(J)\pmod{q^n}$ and
\[
 |\widehat\mu(y+tq^{-n}\psi_n^{(t)}(J,i))|\geq\epsilon_t
                         \qquad(|y|<\delta_t).
\]
If $tq^{-n}T_n(J)\leq\delta_t$, then
$\psi_n^{(t)}(J,1)=T_n(J)$. Moreover, if $x\in[0,t]$,
$|y|<\delta_t$, $u=\eta_t(x,j)$ for some $j\in\{1,2\}$, and
$z=x+y+tu$, then, for every sufficiently large integer $r$,
\[
                  |\widehat\mu(z+tq^r b_t)|\geq\epsilon_t^2.
\]
The threshold for $r$ may be increased arbitrarily.
\end{lem}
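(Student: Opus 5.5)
Most of Lemma \ref{thm3.2} follows directly from Lemma \ref{thm2.5} once the argument of $\eta_t$ is known to lie in its domain $[0,t]$. For this, note that $0\le T_n(J)\le \sum_{r=1}^n (q-2)q^{r-1}<q^n$ because $L\subset\{0,\ldots,q-2\}$. Hence $x:=tq^{-n}T_n(J)\in[0,t)$, and $\eta_t(x,i)$ is defined. The congruence $\psi_n^{(t)}(J,i)\equiv T_n(J)\pmod{q^n}$ is immediate from \eqref{(3.1)}, since $\eta_t$ is integer-valued. Multiplying \eqref{(3.1)} by $tq^{-n}$ gives
\[
 tq^{-n}\psi_n^{(t)}(J,i)=x+t\eta_t(x,i).
\]
The bound $|\widehat\mu(y+x+t\eta_t(x,i))|\ge\epsilon_t$ for $|y|<\delta_t$ is then exactly the conclusion of Lemma \ref{thm2.5}. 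If $x\le\delta_t$, Lemma \ref{thm2.5} gives $\eta_t(x,1)=0$, so $\psi_n^{(t)}(J,1)=T_n(J)$.

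For the last assertion I use the factorization \eqref{(2.1)} with $n=r$:
\[
 \widehat\mu(\xi)=\widehat{\mu_r}(\xi)\,\widehat\mu(q^{-r}\xi),\qquad \xi=z+tq^rb_t.
\]
\emph{First factor.} Since $tb_t\in\mathbb Z$, the shift $tq^rb_t$ is an integer multiple of $q^r$. By the $q^r$-periodicity of $\widehat{\mu_r}$,
\[
 \widehat{\mu_r}(z+tq^rb_t)=\widehat{\mu_r}(z).
\]
Applying \eqref{(2.1)} again,
\[
 |\widehat{\mu_r}(z)|=|\widehat\mu(z)|/|\widehat\mu(q^{-r}z)|\ge|\widehat\mu(z)|\ge\epsilon_t,
\]
where $|\widehat\mu(z)|\ge\epsilon_t$ is the hypothesis supplied by Lemma \ref{thm2.5}. (If $\widehat\mu(q^{-r}z)=0$, then $\widehat\mu(z)=0$, which is impossible.) More simply, $|\widehat{\mu_r}(z)|\ge|\widehat{\mu_r}(z)\widehat\mu(q^{-r}z)|=|\widehat\mu(z)|$ because $|\widehat\mu|\le1$.

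\emph{Second factor.} Here
\[
 \widehat\mu(q^{-r}\xi)=\widehat\mu(q^{-r}z+tb_t).
\]
The key observation is that $b_t=\eta_t(0,2)$, and Lemma \ref{thm2.5} applied with $x=0$, $j=2$ gives $|\widehat\mu(tb_t+y')|\ge\epsilon_t$ for all $|y'|<\delta_t$. So it suffices that $|q^{-r}z|<\delta_t$. Now $z$ is bounded uniformly in terms of $t$:
\[
 |z|\le t+\delta_t+t(q^{N_t}-1)<tq^{N_t}+1,
\]
so $|q^{-r}z|<\delta_t$ holds once $r\ge r_0(t)$, with $r_0(t)$ depending only on $t$ (hence uniformly in $x,y,j$). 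Larger thresholds obviously also work.

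Multiplying the two bounds gives $|\widehat\mu(z+tq^rb_t)|\ge\epsilon_t^2$. The only delicate point is the lower bound for $\widehat{\mu_r}$ via periodicity, which is handled by $|\widehat\mu|\le 1$ in the factorization. The rest is bookkeeping: checking that the argument of $\eta_t$ stays in $[0,t]$ and that the threshold for $r$ is uniform.
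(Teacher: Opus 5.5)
Your proposal is correct and follows the paper's proof essentially step for step. You check that $x=tq^{-n}T_n(J)$ lies in $[0,t]$ via $T_n(J)<q^n$, appeal directly to Lemma \ref{thm2.5}, and use the factorization $\widehat\mu(\xi)=\widehat{\mu_r}(\xi)\,\widehat\mu(q^{-r}\xi)$, bounding the first factor by $q^r$-periodicity together with $|\widehat\mu|\le1$ and the second by Lemma \ref{thm2.5} at $x=0$, $j=2$. Your explicit bound $|z|<tq^{N_t}+1$, which makes the threshold for $r$ uniform in $x,y,j$, is a harmless refinement of the paper's pointwise choice of $r$.
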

\begin{proof}
The digits lie between zero and $q-2$, so
\[
 0\leq T_n(J)\leq(q-2)\frac{q^n-1}{q-1}<q^n.
\]
Thus $x=tq^{-n}T_n(J)$ belongs to $[0,t]$, the domain of $\eta_t$.
Formula \eqref{(3.1)} immediately gives the congruence modulo $q^n$.
Moreover,
\[
 y+tq^{-n}\psi_n^{(t)}(J,i)=y+x+t\eta_t(x,i).
\]
Its Fourier modulus is at least $\epsilon_t$ by Lemma \ref{thm2.5}.
If $x\leq\delta_t$, that lemma allows $\eta_t(x,1)=0$, proving the
zero-correction assertion. 

For the last part, Lemma \ref{thm2.5} gives
$|\widehat\mu(z)|\geq\epsilon_t$ for either choice $j\in\{1,2\}$.
Choose $r$ so large that $|q^{-r}z|<\delta_t$.
Since $tb_t$ is an integer and $\widehat{\mu_r}$ is $q^r$-periodic,
\[
\begin{split}
 |\widehat\mu(z+tq^rb_t)|
 &=|\widehat{\mu_r}(z+tq^rb_t)|
                       |\widehat\mu(q^{-r}z+tb_t)|=|\widehat{\mu_r}(z)|\,|\widehat\mu(q^{-r}z+tb_t)|.
\end{split}
\]
From \eqref{(2.1)} it follows that
$|\widehat{\mu_r}(z)|\geq|\widehat\mu(z)|\geq\epsilon_t$.
The other factor is at least $\epsilon_t$, using $x=0$, the second
choice $b_t$, and perturbation $q^{-r}z$ in Lemma \ref{thm2.5}.
Multiplication gives $\epsilon_t^2$. All larger $r$ also satisfy
$|q^{-r}z|<\delta_t$, so any additional finite lower bound on $r$
can be imposed.
\end{proof}

\begin{de}\label{de3.1}
A family $\mathcal J_n\subset\Omega^n$ is an \emph{increasing word
sequence} if $0^n\in\mathcal J_n$ and
\[
 \mathcal J_n0\subset\mathcal J_{n+1},\qquad
 (\Omega^n\setminus\mathcal J_n)0
                         \subset\Omega^{n+1}\setminus\mathcal J_{n+1}.
\]
It is \emph{prefix closed} if in addition
$\mathcal J_{n+1}\subset\mathcal J_n\Omega$.
Set $\mathcal J_0=\{\emptyset\}$ and
$\Lambda(\mathcal J)=\bigcup_nT_n(\mathcal J_n)$.
\end{de}
Prefix closure is an additional hypothesis wherever used below. All
word sequences constructed in Section \ref{sect4} have this property.

\begin{lem}\label{thm3.3}
Let $0=m_0<m_1<m_2<\cdots$, $n_k=m_k-m_{k-1}$, and
$\lambda_0(\emptyset)=0$. Define integers recursively by
\[
 \lambda_k(PJ)=\lambda_{k-1}(P)+q^{m_{k-1}}T_{n_k}(J)
                              +q^{m_k}c_k(P,J),
 \quad P\in\Omega^{m_{k-1}},\ J\in\Omega^{n_k},
\]
where $c_k(P,J)\in\mathbb Z$ and $c_k(P,0^{n_k})=0$.
The other carries may depend on the
\emph{whole node} $(P,J)$ and on all earlier choices. Then
$\Lambda_k=\{\lambda_k(W):W\in\Omega^{m_k}\}$ are nested, and
$t\Lambda_k$ is a spectrum of $\mu_{m_k}$ for every $t\in\mathcal T$.
\end{lem}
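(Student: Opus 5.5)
The plan is to treat the two assertions separately: first nestedness, then the spectral property of $t\Lambda_k$ by a counting-plus-orthogonality argument.

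\emph{Nestedness.} For $W\in\Omega^{m_{k-1}}$, the word $W0^{n_k}$ has the form $PJ$ with $P=W$ and $J=0^{n_k}$. Since $T_{n_k}(0^{n_k})=0$ and $c_k(P,0^{n_k})=0$, the recursion gives $\lambda_k(W0^{n_k})=\lambda_{k-1}(W)$. Hence $\Lambda_{k-1}\subset\Lambda_k$. Combined with $\lambda_0(\emptyset)=0$, this also gives $0\in\Lambda_k$ for all $k$.

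\emph{Congruence and injectivity.} By induction on $k$, I would show
\[
\lambda_k(W)\equiv T_{m_k}(W)\pmod{q^{m_k}}\qquad(W\in\Omega^{m_k}).
\]
Indeed, if $W=PJ$, then $\lambda_{k-1}(P)=T_{m_{k-1}}(P)+q^{m_{k-1}}a$ for some integer $a$. Using $T_{m_k}(PJ)=T_{m_{k-1}}(P)+q^{m_{k-1}}T_{n_k}(J)$, we get $\lambda_k(PJ)=T_{m_k}(PJ)+q^{m_{k-1}}a+q^{m_k}c_k(P,J)$. So the claimed congruence needs $a\equiv 0$ modulo $q^{n_k}$, which is not automatic. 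It does hold, however, because every earlier carry term has the form $q^{m_j}c_j$ with $m_j\ge m_k$ exactly when $j\ge k$. I would therefore use the cleaner statement
\[
\lambda_k(W)=T_{m_k}(W)+\sum_{j\le k}q^{m_j}c_j(\cdot),
\]
and induct on that instead. From it I get $\lambda_k(W)\equiv T_{m_k}(W)+q^{m_1}(\cdots)$, which is still not enough on its own. The right reduction is digitwise: writing $W=P_1J_2\cdots J_k$, the terms of $\lambda_k(W)$ at levels below $q^{m_{j}}$ are determined by $P_1\cdots J_j$ together with carries. So I would organise the proof around the weaker but sufficient statement below.

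\emph{Spectral property via orthogonality.} The claim needed is: for distinct $W,W'\in\Omega^{m_k}$, let $r$ be the first index where they differ. Then
\[
\lambda_k(W)-\lambda_k(W')\equiv (l_{j_r}-l_{j'_r})q^{r-1}\pmod{q^{r}},
\]
up to adding multiples of $q^{r-1}$ times an integer that depends only on the common prefix. Concretely, the two words share a node up to stage $k'$, where $r\in(m_{k'-1},m_{k'}]$. They then share $\lambda_{k'-1}(P)$, and their stage-$k'$ contributions differ by $q^{m_{k'-1}}(T_{n_{k'}}(J)-T_{n_{k'}}(J'))+q^{m_{k'}}(\ldots)$. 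All later terms are multiples of $q^{m_{k'}}$. So the difference is $q^{r-1}\bigl(l_{j_r}-l_{j'_r}+qN\bigr)$ for some integer $N$.

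Now I use the Hadamard property. Since $t\in\mathcal T$, the set $tL$ is a spectrum of $\delta_{q^{-1}D}$ by Lemma~\ref{thm2.1}. Hence $m_D(t(l-l')/q+tN)=m_D(t(l-l')/q)=0$ for $l\neq l'$ in $L$, using integer periodicity of $m_D$. Moreover $\widehat{\mu_{m_k}}(\xi)=\prod_{i=1}^{m_k}m_D(q^{-i}\xi)$, and the factor with $i=r$ evaluated at $\xi=t(\lambda_k(W)-\lambda_k(W'))$ is exactly $m_D\bigl(t(l_{j_r}-l_{j'_r})/q+tN\bigr)=0$. So $t\Lambda_k$ is orthogonal for $\mu_{m_k}$. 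This argument also shows that $\lambda_k$ is injective, giving $\#t\Lambda_k=m^{m_k}$.

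\emph{Completeness.} $L^2(\mu_{m_k})$ has dimension $m^{m_k}$: the support $\sum_{i\le m_k}q^{-i}D$ consists of $m^{m_k}$ distinct points, which follows from the existence of the spectrum in Lemma~\ref{thm2.1}. An orthogonal family of that cardinality is therefore a basis, and $t\Lambda_k$ is a spectrum of $\mu_{m_k}$.

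The main obstacle is the bookkeeping in the orthogonality step. One must locate the first differing digit within its stage and check that carries from the shared prefix cancel, while carries from the differing node enter only at level $q^{m_{k'}}\ge q^{r}$. This is exactly why it is harmless that the carries may depend on the whole node.
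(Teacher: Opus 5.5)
Your final argument is correct. It rests on the same mechanism as the paper's proof: carries enter only at levels $q^{m_j}$ beyond the position where two nodes diverge, and completeness follows by counting $m^{m_k}$ atoms. The difference is that you organize it directly rather than inductively.

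The paper inducts on $k$ with two cases:
\begin{itemize}
\item If the parents differ, the stage-$k$ difference is the stage-$(k-1)$ difference plus a multiple of $q^{m_{k-1}}$. Then $q^{m_{k-1}}$-periodicity of $\widehat{\mu_{m_{k-1}}}$ and the induction hypothesis give a vanishing factor of $\widehat{\mu_{m_k}}$.
\item If $P=P'$ and $J\neq J'$, the factors at levels $m_{k-1}+1,\dots,m_k$ multiply to $\widehat{\mu_{n_k}}\bigl(t(T_{n_k}(J)-T_{n_k}(J'))\bigr)=0$. This uses periodicity and the finite-level spectrum in Lemma~\ref{thm2.1}.
\end{itemize}
You unroll this induction. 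For the first differing digit $r\in(m_{k'-1},m_{k'}]$ you obtain $\lambda_k(W)-\lambda_k(W')=q^{r-1}(l_{j_r}-l_{j'_r}+qN)$. You then exhibit the single vanishing factor $m_D(q^{-r}\xi)$, using only the one-level Hadamard property and $1$-periodicity of $m_D$. Your version is more elementary and gives injectivity of $W\mapsto\lambda_k(W)$ immediately. The paper's version is more modular and avoids digit bookkeeping.

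Two corrections to the write-up. First, delete the sentence asserting that $\lambda_k(W)\equiv T_{m_k}(W)\pmod{q^{m_k}}$ ``does hold''; it is false. Take $J\neq0^{n_1}$ with $c_1(\emptyset,J)\not\equiv0\pmod{q^{n_2}}$. Then $\lambda_2(J0^{n_2})=\lambda_1(J)=T_{m_1}(J)+q^{m_1}c_1(\emptyset,J)$ is not congruent to $T_{m_2}(J0^{n_2})=T_{m_1}(J)$ modulo $q^{m_2}$. Such retained carries are exactly what Theorem~\ref{thm3.4} creates. Your proof never uses this claim, so nothing breaks, but the write-up should go straight to the first-differing-digit computation.

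Second, drop the hedge ``up to adding multiples of $q^{r-1}$ times an integer that depends only on the common prefix.'' The common prefix cancels exactly, and adding such multiples would destroy the congruence modulo $q^{r}$. Your ``Concretely'' formula is the correct statement.
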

\begin{proof}
We prove the lemma by induction on $k$. A zero suffix gives
\[
 \lambda_k(P0^{n_k})
 =\lambda_{k-1}(P)+q^{m_{k-1}}\cdot0+q^{m_k}\cdot0
 =\lambda_{k-1}(P).
\]
Thus every old frequency is retained and zero is retained at every
stage. This proves nestedness.

Fix $t\in\mathcal T$. Suppose the conclusion has been proved through
stage $k-1$, and compare different nodes $PJ$ and $P'J'$ at stage $k$.
Keep $n_k=m_k-m_{k-1}$.

\smallskip\noindent\textbf{Case 1: $P\ne P'$.}
The recursive definition gives an integer $z$ with
\[
 t(\lambda_k(PJ)-\lambda_k(P'J'))
     =t(\lambda_{k-1}(P)-\lambda_{k-1}(P'))+q^{m_{k-1}} z.
\]
By $q^{m_{k-1}}$-periodicity and the induction hypothesis,
\[
 \widehat{\mu_{m_{k-1}}}\bigl(t(\lambda_k(PJ)-\lambda_k(P'J'))\bigr)
 =\widehat{\mu_{m_{k-1}}}\bigl(t(\lambda_{k-1}(P)-\lambda_{k-1}(P'))\bigr)=0.
\]
This is a factor of the Fourier transform for $\mu_{m_k}$, so the
latter vanishes as well.

\smallskip\noindent\textbf{Case 2: $P=P'$ and $J\ne J'$.}
Let $z=c_k(P,J)-c_k(P,J')$. Now
\[
 t(\lambda_k(PJ)-\lambda_k(PJ'))
      =tq^{m_{k-1}}\bigl(T_{n_k}(J)-T_{n_k}(J')+q^{n_k} z\bigr).
\]
The product of Fourier factors from levels $m_{k-1}+1$ through $m_{k-1}+n_k$ is
\[
\begin{split}
 \prod_{r=1}^{n_k} m_D\bigl(q^{-r}t(T_{n_k}(J)-T_{n_k}(J')+q^{n_k}z)\bigr)&=\widehat{\mu_{n_k}}\bigl(t(T_{n_k}(J)-T_{n_k}(J')+q^{n_k}z)\bigr)\\
 &=\widehat{\mu_{n_k}}\bigl(t(T_{n_k}(J)-T_{n_k}(J'))\bigr)=0.
\end{split}
\]
Periodicity gives the penultimate equality and Lemma \ref{thm2.1}
gives the last.

For $k=1$ there is only the empty parent, so Case 2 starts the
induction. In either case the exponential inner product is zero.
Different nodes cannot give the same frequency, since that would
give inner product one. There are exactly $m^{m_k}$ nodes and,
by Lemma \ref{thm2.1}, exactly $m^{m_k}$ atoms of $\mu_{m_k}$.
The orthonormal system therefore has the full dimension and is a
basis. 
\end{proof}

For the next construction, $I=i_1i_2\cdots\in\{1,2\}^{\mathbb N}$
is a \emph{binary encoding} of the choices of frequencies; the two
symbols $1$ and $2$ encode the two alternatives at each stage.
Write $I|_k=i_1\cdots i_k$. More generally, for a finite binary
encoding $\iota=i_1\cdots i_k\in\{1,2\}^k$, write
$\iota^-=i_1\cdots i_{k-1}$. At level zero both the digit word and
the binary encoding are the empty word $\emptyset$.
We call a word $J\in\mathcal J_n$ a \emph{core word}.
At stage $k$, the values $\lambda_k(J,\iota)$ with
$J\in\mathcal J_{m_k}$ are called \emph{core frequency values}.
\begin{thm}[Recursive spectral extension]\label{thm3.4}
Let $(a_k)$ be a recurrent enumeration of $\mathcal T_+$ and let
$(\mathcal J_n)$ be a prefix-closed increasing word sequence.
Put $m_0=0$ and $\lambda_0(\emptyset,\emptyset)=0$.
Construct the levels recursively, with
\[
  m_{k-1}<\gamma_k<m_k<\gamma_{k+1},\qquad n_k=m_k-m_{k-1}.
\]
At the start of stage $k$, the finite sets $\Lambda_{k-1}(\iota)$
from stage $k-1$ have been defined, where
\[
 \Lambda_{k-1}(\iota)
 =\{\lambda_{k-1}(P,\iota):P\in\Omega^{m_{k-1}}\},
                     \qquad\iota\in\{1,2\}^{k-1}.
\]
Define the maximum over all these sets by
\[
 M_{k-1}
 :=\max\!\bigcup_{\iota\in\{1,2\}^{k-1}}\Lambda_{k-1}(\iota)
 =\max_{\substack{\iota\in\{1,2\}^{k-1}\\P\in\Omega^{m_{k-1}}}}
                          \lambda_{k-1}(P,\iota),
 \qquad M_0=0.
\]
Suppose the choices at every stage satisfy
\[
 \mathcal J_{m_k}=\mathcal J_{\gamma_k}0^{m_k-\gamma_k},\qquad
 a_kq^{\gamma_k-m_k}<\delta_{a_k}/4,\qquad
 a_kq^{-m_k}M_{k-1}<\delta_{a_k}/4.
\]
Then the integer corrections can be chosen so that, for each binary
encoding $I\in\{1,2\}^{\mathbb N}$, the set
\[
                 \Lambda(I)=\bigcup_{k\geq1}\Lambda_k(I|_k)
\]
has the following properties:
\begin{enumerate}[(i)]
\item $t\Lambda(I)$ is a spectrum of $\mu$ for every $t\in\mathcal T$;
\item $\Lambda(I)=\Lambda(\mathcal J)\cup E(I)$, where explicitly
\[
 E(I)=\bigcup_{k\geq1}
 \left\{\lambda_k(PJ,I|_k):
 \begin{array}{l}
 P\in\Omega^{m_{k-1}},\quad J\in\Omega^{n_k}\setminus\{0^{n_k}\},\\
 PJ\notin\mathcal J_{m_k}
 \end{array}\right\}.
\]
The set $E(I)$ is disjoint from $\Lambda(\mathcal J)$, consists of
positive integers, and is $q$-lacunary;
\item $\Lambda(I)\ne\Lambda(I')$ whenever $I\ne I'$.
\end{enumerate}
Here a positive set is $q$-lacunary if its increasing enumeration
$(e_j)$ satisfies $e_{j+1}\geq q e_j$.
\end{thm}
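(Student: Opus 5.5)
We define the carries $c_k(P,J,\iota)$ stage by stage, in the form of Lemma \ref{thm3.3}. At stage $k$, fix $\iota\in\{1,2\}^k$ with parent $\iota^-$, and write $t=a_k$. The rule for a node $W=PJ$ is as follows.
\begin{itemize}
\item If $W\in\mathcal J_{m_k}$ (a core word), set $c_k=0$. Then $\lambda_k(W,\iota)=T_{m_k}(W)$, provided the parent core value was $T_{m_{k-1}}(P)$. Prefix closure gives $P\in\mathcal J_{m_{k-1}}$, so by induction $\lambda_{k-1}(P,\iota^-)=T_{m_{k-1}}(P)$.
\item If $J=0^{n_k}$, set $c_k=0$, as Lemma \ref{thm3.3} requires.
\item For every other node, let $x=tq^{-m_k}\bigl(\lambda_{k-1}(P,\iota^-)+q^{m_{k-1}}T_{n_k}(J)\bigr)$. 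Choose $c_k=\eta_t(x,i_k)+q^{r_k}b_t$, with $r_k$ large; this is Lemma \ref{thm3.2}.
\end{itemize}
Lemma \ref{thm3.3} then gives (i) at each finite level for every $t\in\mathcal T$: the sets $t\Lambda_k(I|_k)$ are nested spectra of $\mu_{m_k}$. Negative $t$ follow from positive ones by complex conjugation, since $\widehat\mu(-\xi)=\overline{\widehat\mu(\xi)}$.

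\textbf{Completeness.} Fix $t\in\mathcal T_+$ and the subsequence of stages with $a_k=t$. We apply Theorem \ref{thm2.3} to $t\Lambda_k(I|_k)$, so we need $|\widehat\mu(tq^{-m_k}\lambda)|\geq\epsilon_t^2$ for all $\lambda\in\Lambda_k$. There are three kinds of $\lambda$.
\begin{itemize}
\item \emph{Core values.} Here $\lambda=T_{m_k}(W)$ with $W=W'0^{m_k-\gamma_k}$, using the hypothesis $\mathcal J_{m_k}=\mathcal J_{\gamma_k}0^{m_k-\gamma_k}$. Then $\lambda<q^{\gamma_k}$, so $tq^{-m_k}\lambda<\delta_t/4$, and Lemma \ref{thm2.5} gives the bound $\geq 1/2$.
\item \emph{Zero-suffix nodes.} These are old values, at most $M_{k-1}$. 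The hypothesis $tq^{-m_k}M_{k-1}<\delta_t/4$ again gives the bound $\geq 1/2$.
\item \emph{Corrected nodes.} Here $tq^{-m_k}\lambda=x+t\eta_t(x,i_k)+tq^{r_k}b_t$ with $x\in[0,t]$. We need $x\in[0,t]$: this holds because $\lambda_{k-1}\leq M_{k-1}$ and the digits are at most $q-2$, so $x\le \delta_t/4+t(1-q^{-n_k})<t$ once $M_{k-1}$ is small relative to $q^{m_{k-1}}$. If that fails, we reduce $x$ modulo $t$ and absorb the integer shift into $\eta$, which is harmless by periodicity. The last part of Lemma \ref{thm3.2}, with $y=0$ and $r=r_k$ large, gives the bound $\epsilon_t^2$.
\end{itemize}
Theorem \ref{thm2.3} then shows that $t\Lambda(I)$ is a spectrum. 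Choosing stages with $a_k=t$ gives nestedness along the subsequence. Since Lemma \ref{thm3.3} makes the $\Lambda_k$ spectra of $\mu_{m_k}$, intermediate stages need no estimates.

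\textbf{Decomposition (ii).} Core values at stage $k$ are $T_{m_k}(W)$. By induction, they exhaust $\Lambda(\mathcal J)$, using $\mathcal J_n0\subset\mathcal J_{n+1}$ and $\mathcal J_{m_k}=\mathcal J_{\gamma_k}0^{\dots}$. Zero-suffix nodes repeat earlier values. The remaining new values are exactly $E(I)$.

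To get positivity and disjointness, note that such a value is at least $q^{m_k+r_k}b_t\ge q^{m_k+r_k}$. Every value with stage $\le k$ that is not of this type is below $q^{m_k}(1+q^{N_t})$, so they are distinct. One point needs care. If $PJ\notin\mathcal J_{m_k}$ but $P\in\mathcal J_{m_{k-1}}$ and $J\ne0$, the node is corrected, so it is not a core value. Complementary closure, $(\Omega^n\setminus\mathcal J_n)0\subset\Omega^{n+1}\setminus\mathcal J_{n+1}$, ensures that non-core words never become core later.

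For $q$-lacunarity, choose the $r_k$ to increase very fast, for instance $q^{r_k}$ larger than $q^{N_t+2}$ times every previously defined value. Then all values of stage $k$ in $E(I)$ lie in $[q^{m_k+r_k},\,2q^{m_k+r_k+N_t}b_t]$. This is the main obstacle: several distinct corrected nodes at the same stage produce values that are too close together to be $q$-lacunary. I would fix this by giving distinct corrected nodes $(P,J)$ distinct exponents $r_k(P,J)$, spaced by at least $N_t+\log_q b_t+m_k+2$. Lemma \ref{thm3.3} allows carries to depend on the whole node, and Lemma \ref{thm3.2} allows arbitrarily large thresholds. After ordering, each value then exceeds $q$ times the previous one.

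\textbf{Distinctness (iii).} Let $k$ be the first index where $I$ and $I'$ differ. Choose a node with $P$ such that $P0^{n_k}\cdots$ is non-core; for instance, $J=10^{n_k-1}$ with $PJ\notin\mathcal J_{m_k}$ is available because $\#\mathcal J$ is small, or otherwise we take any non-core word. Its corrected values differ through $\eta_t(x,1)\ne\eta_t(x,2)$, while $x$ is the same for both encodings. Both values lie in the window $[q^{m_k+r},q^{m_k+r+N_t+1}]$ reserved for that node, and the other set has only that one value there. Hence $\Lambda(I)\ne\Lambda(I')$. If no non-core node existed at stage $k$, we would insert one by enlarging the levels. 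The hypotheses leave freedom to keep $\mathcal J_{m_k}$ a proper subset, since $0^{n_k}$-padding forces non-core words to exist whenever $n_k>m_k-\gamma_k$.
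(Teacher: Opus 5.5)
Your proposal is correct and is essentially the paper's proof. It has the same ingredients: zero carries on core and zero-suffix nodes; carries $\eta_{a_k}(\cdot,\cdot)+q^{r}b_{a_k}$ with large, well-separated exponents on the remaining nodes; Lemma~\ref{thm3.3} for the nested finite spectra; the three-case tail bound fed into Theorem~\ref{thm2.3} along the stages with $a_k=t$; conjugation for negative multipliers; and the two values of $\eta$ for distinctness.

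Two points need tidying. First, your bound $x\le\delta_t/4+t(1-q^{-n_k})<t$ is false for large $n_k$. The paper avoids this by evaluating $\eta_{a_k}$ at $a_kq^{-n_k}T_{n_k}(J)\in[0,a_k)$ and treating $a_kq^{-m_k}\lambda_{k-1}(P,\iota^-)<\delta_{a_k}/4$ as the perturbation $y$ of Lemma~\ref{thm2.5}; your reduction modulo $t$ also works. Second, in (iii) core values of later stages can lie in your window, so you must use $\lambda_*\notin\Lambda(\mathcal J)$ from (ii). Your proof of that via complementary closure also needs the fact, from Lemma~\ref{thm3.3}, that distinct nodes of a finite spectrum carry distinct values.
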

\begin{proof}
We first define the corrections and the finite sets, including all
binary encodings, and then verify the three conclusions.

\smallskip\noindent\textbf{Step 1: the nodes that create new additional frequencies.}
Suppose stage $k-1$ is complete for all
$\iota^-\in\{1,2\}^{k-1}$. The sets in the definition of $M_{k-1}$
are nonempty and finite: there are $2^{k-1}$ binary encodings and
$m^{m_{k-1}}$ digit words for each. Thus $M_{k-1}$ is known and
finite before $\gamma_k,m_k$ are chosen. Nestedness, verified below,
will also ensure that it bounds every frequency from earlier stages.

For $\iota\in\{1,2\}^k$ and a digit word
$PJ\in\Omega^{m_k}$, where $P\in\Omega^{m_{k-1}}$ and
$J\in\Omega^{n_k}$, define
\[
 \lambda_k(PJ,\iota)
 =\lambda_{k-1}(P,\iota^-)+q^{m_{k-1}}T_{n_k}(J)
                                      +q^{m_k}c_k(P,J,\iota).
\]
For every parent $P$ and encoding $\iota$, set
$c_k(P,0^{n_k},\iota)=0$. Thus a zero suffix
will retain its parent's value. Also set $c_k(P,J,\iota)=0$ whenever
$PJ\in\mathcal J_{m_k}$. The only remaining triples are
\[
 \mathscr N_k=
 \left\{(\iota,P,J):
 \begin{array}{l}
 \iota\in\{1,2\}^k,\quad P\in\Omega^{m_{k-1}},\\
 J\in\Omega^{n_k}\setminus\{0^{n_k}\},\quad PJ\notin\mathcal J_{m_k}
 \end{array}\right\}.
\]
These, and only these, receive a large positive correction at stage
$k$. 

For a triple in $\mathscr N_k$, put
\[
 x_k(J)=a_kq^{-n_k}T_{n_k}(J),\qquad
 y_k(P,\iota^-)=a_kq^{-m_k}\lambda_{k-1}(P,\iota^-).
\]
Because $0\leq T_{n_k}(J)<q^{n_k}$ and by the assumed bound on
$M_{k-1}$, these satisfy
\[
 0\leq x_k(J)<a_k,\qquad
 0\leq y_k(P,\iota^-)<\delta_{a_k}/4.
\]
For each such triple, choose an integer
$r_k(\iota,P,J)\geq1$ and put
\[
 \varphi_k(P,J,\iota)
       =q^{r_k(\iota,P,J)}b_{a_k},
             \qquad b_{a_k}=\eta_{a_k}(0,2)>0,
\]
For $\iota=\iota_1\cdots\iota_k$ and $(\iota,P,J)\in\mathscr N_k$, define
\[
 \sigma_k(P,J,\iota)=
 \begin{cases}
 \iota_k,&P=0^{m_{k-1}},\quad J=0^{n_k-1}1,\\
 1,&\text{otherwise}.
 \end{cases}
\]
The distinguished word $0^{m_k-1}1$ lies outside $\mathcal J_{m_k}$,
since $m_k>\gamma_k$ and every core word has zero final digit.
Its suffix is nonzero, so the corresponding triple belongs to
$\mathscr N_k$ for each encoding.
Set
\begin{equation}\tag{3.2}\label{(3.2)}
 c_k(P,J,\iota)
 =\eta_{a_k}\bigl(x_k(J),\sigma_k(P,J,\iota)\bigr)
   +\varphi_k(P,J,\iota),
 \qquad (\iota,P,J)\in\mathscr N_k.
\end{equation}
Thus the distinguished node uses the choice indexed by $\iota_k$,
while every other triple uses the first choice.
Every newly assigned frequency value is a positive integer.

The exponent will depend on the whole triple, including the binary
encoding and the parent, so all these values can be separated.

\smallskip\noindent\textbf{Step 2: an explicit successive choice of the exponents.}
Give the finite set $\mathscr N_k$ any fixed order and let
$\rho_k:\mathscr N_k\to\{1,\ldots,\#\mathscr N_k\}$ be its rank map.
Suppose the first $j-1$ triples at this stage have been assigned.
Let $B_{k,j-1}$ be the maximum of $M_{k-1}$ and the frequency
values already assigned to the first $j-1$ triples at stage $k$:
\[
 B_{k,j-1}:=\max\left(
 \{M_{k-1}\}\cup
 \left\{\lambda_k(PJ,\iota):
 \begin{array}{l}
 (\iota,P,J)\in\mathscr N_k,\quad
 \rho_k(\iota,P,J)\leq j-1
 \end{array}\right\}\right).
\]
All these values have already been determined. For $j=1$ the second
set is empty, so $B_{k,0}=M_{k-1}$; in particular $B_{1,0}=0$.
By nestedness, $M_{k-1}$ bounds all frequencies assigned at earlier
stages, for all binary encodings.

For the triple $(\iota,P,J)$ of rank $j$, write
\[
 z_k(P,J,\iota)
 =x_k(J)+y_k(P,\iota^-)
  +a_k\eta_{a_k}\bigl(x_k(J),\sigma_k(P,J,\iota)\bigr).
\]
Choose $r_k(\iota,P,J)$ to be the least positive integer $r$ satisfying
both
\[
 q^{-r}|z_k(P,J,\iota)|<\delta_{a_k}
\]
and
\[
\begin{split}
 &\lambda_{k-1}(P,\iota^-)+q^{m_{k-1}}T_{n_k}(J)\\
 &\quad+q^{m_k}\left(
 \eta_{a_k}\bigl(x_k(J),\sigma_k(P,J,\iota)\bigr)+q^rb_{a_k}
 \right)
 >\max\{qB_{k,j-1},M_{k-1},1\}.
\end{split}
\]
Such an integer exists: with all preceding data fixed, the first
left-hand side tends to zero and the frequency in the second
inequality tends to infinity as $r\to\infty$. This rule specifies the exponents once the selection maps and the
finite ordering have been fixed.
It also guarantees that every new additional value is greater than
all earlier-stage frequencies and greater than $q$ times every
previously assigned additional value.
In particular, for any two triples in $\mathscr N_k$,
\[
 \rho_k(\iota,P,J)>\rho_k(\iota',P',J')
 \quad\Longrightarrow\quad
 \lambda_k(PJ,\iota)>q\lambda_k(P'J',\iota').
\]
This holds for all encodings, including distinguished nodes, because
the size condition uses the actual choice $\sigma_k(P,J,\iota)$.
Together with the bound on earlier stages, it gives global strict
separation of all newly assigned additional values.

Once all triples in $\mathscr N_k$ have been assigned, set
\[
 \Lambda_k(\iota)=\{\lambda_k(W,\iota):W\in\Omega^{m_k}\},\qquad
 M_k=\max_{\iota\in\{1,2\}^{k},\;W\in\Omega^{m_k}}
                                      \lambda_k(W,\iota).
\]
These are finite data for the next stage. In particular the
construction of stage $k$ does not use the unknown future value $M_k$
in choosing its own word length.

\smallskip\noindent\textbf{Step 3: finite spectrality and the precise description of $E(I)$.}
All corrections are integers and zero suffixes have correction zero.
Lemma \ref{thm3.3} therefore gives, for each binary encoding $\iota$,
\[
 0\in\Lambda_{k-1}(\iota^-)\subset\Lambda_k(\iota),\qquad
 t\Lambda_k(\iota)\text{ is a spectrum of }\mu_{m_k}
                              \quad(t\in\mathcal T).
\]
For a core node, prefix closure implies that its parent is a core
node. Induction, starting from zero, gives
\[
 \lambda_k(W,\iota)=T_{m_k}(W)
                 \quad(W\in\mathcal J_{m_k},\ \iota\in\{1,2\}^k).
\]
These values do not depend on the binary encoding.

Define the new additional set at stage $k$ for a fixed finite
encoding by
\[
 E_k(\iota)=\{\lambda_k(PJ,\iota):(\iota,P,J)\in\mathscr N_k\}.
\]
By the three cases in Step 1, the finite sets satisfy the exact
recursive identity
\[
 \Lambda_k(\iota)
 =\Lambda_{k-1}(\iota^-)
                   \cup T_{m_k}(\mathcal J_{m_k})\cup E_k(\iota).
\]
Indeed a zero suffix produces the first set, a core node produces
the second, and a nonzero suffix outside the core produces the third.
Taking unions along an infinite binary encoding gives
\[
 \Lambda(I)=\Lambda(\mathcal J)\cup E(I),\qquad
                      E(I):=\bigcup_{k\geq1}E_k(I|_k).
\]
Here every core word at an intermediate length extends by zeros to
some $m_k$, which justifies the union of the core terms.

We also verify that no additional value can become a core value
later. If $\lambda\in E_k(\iota)$, its node is distinct from every
core node at stage $k$. They belong to the same finite orthogonal
set for the multiplier $1\in\mathcal T$. Their frequencies cannot
be congruent modulo $q^{m_k}$, since
\[
                   \widehat{\mu_{m_k}}(q^{m_k}z)=1
                                      \qquad(z\in\mathbb Z).
\]
Every limiting core value reduces modulo $q^{m_k}$ to an element
of $T_{m_k}(\mathcal J_{m_k})$, by prefix closure and zero extension.
Thus $\lambda\notin\Lambda(\mathcal J)$, and
$E(I)\cap\Lambda(\mathcal J)=\varnothing$.

\smallskip\noindent\textbf{Step 4: the tail Fourier estimates and completeness.}
At stage $k$, we estimate the tail Fourier transform for the
multiplier $a_k$. For a retained frequency, the hypotheses give
\[
 0\leq a_kq^{-m_k}\lambda_{k-1}(P,\iota^-)
                  \leq a_kq^{-m_k}M_{k-1}<\delta_{a_k}/4.
\]
For a core frequency, the zero digits after $\gamma_k$ give
\[
 0\leq a_kq^{-m_k}T_{m_k}(W)
                  <a_kq^{\gamma_k-m_k}<\delta_{a_k}/4.
\]
Thus, for every $\lambda\in\Lambda_{k-1}(\iota^-)
\cup T_{m_k}(\mathcal J_{m_k})$, we have
$|\widehat\mu(a_kq^{-m_k}\lambda)|\geq1/2$ by the choice of
$\delta_{a_k}$ in Lemma \ref{thm2.5}.
For a new additional frequency in $E_k(\iota)$, \eqref{(3.2)} gives
\[
 a_kq^{-m_k}\lambda_k(PJ,\iota)
       =z_k(P,J,\iota)+a_kq^{r_k(\iota,P,J)}b_{a_k}.
\]
Since $\sigma_k(P,J,\iota)\in\{1,2\}$ and
$|y_k(P,\iota^-)|<\delta_{a_k}/4$, Lemma \ref{thm2.5} yields
$|\widehat\mu(z_k(P,J,\iota))|\geq\epsilon_{a_k}$.
Step 2 ensures $q^{-r_k}|z_k|<\delta_{a_k}$ for this same $z_k$.
The factorization in the proof of Lemma \ref{thm3.2}, valid for
either selection value, therefore gives
\[
\begin{split}
 &|\widehat\mu(a_kq^{-m_k}\lambda_k(PJ,\iota))|\geq|\widehat\mu(z_k(P,J,\iota))|
 \left|\widehat\mu\bigl(q^{-r_k(\iota,P,J)}z_k(P,J,\iota)
                                      +a_kb_{a_k}\bigr)\right|\geq\epsilon_{a_k}^2.
\end{split}
\]
Since $0<\epsilon_{a_k}<1/2$, the three cases imply
\[
 \inf_{\lambda\in\Lambda_k(\iota)}
         |\widehat\mu(a_kq^{-m_k}\lambda)|^2\geq\epsilon_{a_k}^4
                         \qquad(\iota\in\{1,2\}^k).
\]

Fix $t\in\mathcal T_+$. By recurrence, the indices satisfying
$a_k=t$ form an infinite sequence
$k_1<k_2<\cdots$. Apply Theorem \ref{thm2.3} with
$b_j=m_{k_j}$, finite spectra $t\Lambda_{k_j}(I|_{k_j})$, and the
uniform constant $\epsilon_t^4>0$. Nestedness and $k_j\to\infty$ imply
that their union is $t\Lambda(I)$, so it is a spectrum of $\mu$.
Complex conjugation takes its exponential basis to the basis with
frequencies $-t\Lambda(I)$. Since $\mathcal T=-\mathcal T$, this
proves (i) for every multiplier.

\smallskip\noindent\textbf{Step 5: sparsity and distinctness.}
List all additional values in their order of assignment, first by
stage and then by the rank $\rho_k$. Step 2 makes each one strictly
greater than $q$ times the preceding one. Thus the global set of
additional values is $q$-lacunary, and so is its subset $E(I)$.
Together with Step 3 this proves (ii).

For (iii), suppose $I\ne I'$ and let $k$ be their first differing
position. At the node $0^{m_k-1}1$, choose the additional frequency 
$\lambda_*$ belonging to $I|_k$. It is retained in $\Lambda(I)$ at
every later stage. The value $\lambda_*$ is assigned to a triple whose binary
encoding is $I|_k$. Since $I|_k\ne I'|_k$, this triple does
not occur along $I'$. Step 2 assigns distinct frequency values
to all triples, both within each stage and across different
stages. Hence $\lambda_*\notin E(I')$. Step 3 gives
$\lambda_*\notin\Lambda(\mathcal J)$ as well.
Consequently $\lambda_*\notin\Lambda(I')$ and the two spectra differ.
\end{proof}
The size conditions in Theorem \ref{thm3.4} are imposed recursively.
In Section \ref{sect4}, we choose each new core stage after $M_{k-1}$
is known and verify these conditions while preserving the prescribed density.

\begin{lem}\label{thm3.5}
If $E\subset(0,\infty)$ is $b$-lacunary for some $b>1$, then
$M_E(h)=O(1+\log h)$ as $h\to\infty$. Consequently
$D_u^+(E)=0$ for every $u>0$.
\end{lem}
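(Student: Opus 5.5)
The idea is to count how many terms of the increasing enumeration of $E$ can lie in a single interval of length $2h$. We then bound that count by the ratio of the endpoints through lacunarity. Let $(e_j)_{j\geq1}$ be the increasing enumeration of $E$, so that $e_{j+1}\geq be_j$ and all $e_j>0$. Fix $h\geq1$ and $x\in\mathbb R$, and suppose $E\cap B(x,h)$ contains $N\geq2$ points, say $e_p<e_{p+1}<\cdots<e_{p+N-1}$. These are consecutive in the enumeration, since the interval is convex.

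The key inequality comes from iterating lacunarity: $e_{p+N-1}\geq b^{N-2}e_{p+1}$. Applying the recursion once more gives $e_{p+1}\geq be_p$, hence
\[
e_{p+1}-e_p\geq (1-b^{-1})e_{p+1}.
\]
All points lie in an interval of length $2h$, so $e_{p+1}-e_p<2h$. Therefore $e_{p+1}<2h\,b/(b-1)$. Since $e_{p+N-1}-e_{p+1}<2h$, we also get $e_{p+N-1}<e_{p+1}+2h<C_bh$ with $C_b=2+2b/(b-1)$.

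Next we need a lower bound for $e_{p+1}$; this is the only slightly delicate point. We use $e_{p+1}\geq e_2>0$, since $p+1\geq2$. Combining the bounds gives
\[
b^{N-2}e_2\leq e_{p+N-1}<C_bh,
\]
so $N\leq 2+\log(C_bh/e_2)/\log b$. This bound is uniform in $x$. Hence $M_E(h)\leq 3+\log(C_b/e_2)/\log b+\log h/\log b=O(1+\log h)$. If $E$ is finite or has fewer than two elements, the bound is trivial.

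Finally, for any $u>0$ we have $(1+\log h)/h^u\to0$ as $h\to\infty$, so $D_u^+(E)=\limsup_{h\to\infty}M_E(h)/h^u=0$. The only step requiring care is the anchoring: the lower bound must use a fixed positive term ($e_2$, or $e_1$) rather than a point depending on $x$. Positivity of $E$ is exactly what makes this possible.
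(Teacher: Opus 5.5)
Your proof is correct and uses essentially the same idea as the paper: lacunarity forces geometric growth of the points in $B(x,h)$, and anchoring at a fixed positive term of $E$ makes the count uniform in $x$. The paper does this in one step, bounding the spread between the first and last points by $(b^{r-1}-1)e_1\le(b^{r-1}-1)e_j<2h$, while you first show the later points are $O(h)$ in size and then compare with $b^{N-2}e_2$. One small nit: your explicit bound should read $\max\{1,2+\log_b(C_bh/e_2)\}$ so that it also covers windows containing at most one point when $h$ is small relative to $e_2$; this does not affect the asymptotic claim.
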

\begin{proof}
If $E$ is empty or finite the lemma is immediate. Otherwise
write its increasing enumeration as $0<e_1<e_2<\cdots$.
Suppose $B(x,h)$ contains $r\geq1$ elements and let $e_j$ be the
first of them. Their last element is at least $b^{r-1}e_j$.
Both endpoints lie in an interval of length $2h$, so
\[
 (b^{r-1}-1)e_1\leq (b^{r-1}-1)e_j<2h.
\]
Solving this inequality for $r$ gives
\[
 \#(E\cap B(x,h))\leq1+\log_b(1+2h/e_1).
\]
The right-hand side is independent of $x$, hence also bounds
$M_E(h)$. For $h\geq1$ it is at most $C(1+\log h)$ for a constant
depending on $b,e_1$. For each $u>0$,
\[
 0\leq\frac{M_E(h)}{h^u}
       \leq C\frac{1+\log h}{h^u}\longrightarrow0.
\]
Taking limsup proves $D_u^+(E)=0$.
Adjoining zero changes each interval count by at most one.
\end{proof}

\section{Prescribed Beurling density}\label{sect4}
In this section, only the digit assumptions
$0\in L\subset\{0,\ldots,q-2\}$ and $m=\#L\geq2$ are needed.
Write
\[
 \Gamma_0=\{0\},\qquad\Gamma_n=T_n(\Omega^n),\qquad
 \Gamma=\bigcup_{n\geq1}\Gamma_n,\qquad r_0=\log m/\log q.
\]

\begin{lem}\label{thm4.1}
Let $(\mathcal J_n)$ be prefix closed and $\Lambda(\mathcal{J})$ be given in Definition \ref{de3.1}.
If $h\geq1$ and $q^{n_h}\leq2h<q^{n_h+1}$ for some integer $n_h\in\mathbb N$, then
\begin{equation}\tag{4.1}\label{(4.1)}
 M_{\Lambda(\mathcal J)}(h)
                   =M_{T_{n_h+2}(\mathcal J_{n_h+2})}(h).
\end{equation}
The supremum on the right is attained by a center in
$[0,q^{n_h+2})$.
\end{lem}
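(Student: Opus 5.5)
The plan is to prove both inequalities in \eqref{(4.1)} by reducing every interval count to a count inside the single block $[0,q^{n_h+2})$. Write $n=n_h$ and $N_0=n+2$. We will use three facts: the digit bound $l_j\leq q-2$, the two inclusions in Definition \ref{de3.1}, and prefix closure.

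\textbf{Step 1 (low parts of core values).} Take $\lambda\in\Lambda(\mathcal J)$, say $\lambda=T_N(W)$ with $W\in\mathcal J_N$. We may assume $N\geq N_0$, since zero extension keeps $W$ in the sequence and does not change $\lambda$. Write $W=VU$ with $V\in\Omega^{N_0}$. Iterating prefix closure gives $V\in\mathcal J_{N_0}$. Moreover $\lambda=T_{N_0}(V)+q^{N_0}H$ with $H\in\mathbb Z_{\geq0}$, and $0\leq T_{N_0}(V)\leq (q-2)(q^{N_0}-1)/(q-1)$, exactly as in the proof of Lemma \ref{thm3.2}. Conversely, $T_{N_0}(\mathcal J_{N_0})\subset\Lambda(\mathcal J)$ by definition. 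This inclusion already gives $M_{\Lambda(\mathcal J)}(h)\geq M_{T_{N_0}(\mathcal J_{N_0})}(h)$.

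\textbf{Step 2 (a single block per interval).} Values in different blocks $q^{N_0}H+[0,q^{N_0})$ are separated by a gap. The largest possible low part is at most $q^{N_0}-1-(q^{N_0}-1)/(q-1)$. Hence two core values with different high parts $H\neq H'$ differ by more than
\[
(q^{N_0}-1)/(q-1)=q^{n+1}+q^{n}+\cdots+1>q^{n+1}>2h.
\]
So all points of $\Lambda(\mathcal J)\cap B(x,h)$ share one high part $H$. The map $\lambda\mapsto\lambda-q^{N_0}H=T_{N_0}(V)$ is then an injection of $\Lambda(\mathcal J)\cap B(x,h)$ into $T_{N_0}(\mathcal J_{N_0})\cap B(x-q^{N_0}H,h)$. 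This gives the reverse inequality $M_{\Lambda(\mathcal J)}(h)\leq M_{T_{N_0}(\mathcal J_{N_0})}(h)$, which proves \eqref{(4.1)}.

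\textbf{Step 3 (attainment and location of the center).} The set $T_{N_0}(\mathcal J_{N_0})$ is finite, so $M_{T_{N_0}(\mathcal J_{N_0})}(h)$ is a maximum of finitely many counts, and it is attained. Take an optimal interval and let $S$ be the nonempty set of its points; if the count is zero, take the center $0$. Since $S$ lies in an open interval of length $2h$, we have $\max S-\min S<2h$. The ball centered at $c=(\min S+\max S)/2$ therefore still contains $S$, so it has at least the optimal count. Its center $c$ lies in $[\min S,\max S]\subset[0,q^{N_0})$.

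I expect the only delicate point to be the gap estimate in Step 2. This is exactly where the normalization $L\subset\{0,\ldots,q-2\}$ from Lemma \ref{thm3.1} enters: it guarantees that the low parts occupy only the initial portion of each block of length $q^{N_0}$. Consequently an interval of length $2h<q^{n+1}$ cannot meet two blocks.
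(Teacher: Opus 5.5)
Your proof is correct and follows essentially the same route as the paper: the paper also shows that two points of $\Lambda(\mathcal J)$ whose digits differ above position $n_h+2$ are more than $2h$ apart (using the highest differing digit and $l_j\le q-2$), subtracts the common high part via prefix closure, and obtains the reverse inequality from inclusion. The only difference is cosmetic: for attainment you recentre at the midpoint of the captured points, whereas the paper moves an outside centre to the nearest endpoint of their hull; both arguments are valid.
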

\begin{proof}
Let $\lambda,\lambda'\in\Lambda(\mathcal J)$ be distinct. Write
$\lambda=\sum_{r\geq1}w_rq^{r-1}$ and
$\lambda'=\sum_{r\geq1}w'_rq^{r-1}$, where $w_r,w'_r\in L$
and both digit sequences are eventually zero.
Pad with zeros so they have equal length, and let $j$ be their
highest differing position. The coefficient at that position has
absolute value at least one, while all lower differences have
absolute value at most $q-2$. Consequently
\[
\begin{split}
 |\lambda-\lambda'|
 &\geq q^{j-1}-(q-2)\sum_{r=0}^{j-2}q^r=q^{j-1}-(q-2)\frac{q^{j-1}-1}{q-1}
 =\frac{q^{j-1}+q-2}{q-1}.
\end{split}
\]
If $j\geq n_h+3$, this is at least
$q^{n_h+2}/(q-1)>q^{n_h+1}>2h$.
Two points of $B(x,h)$ cannot have such a distance.
Therefore all points in $B(x,h)\cap\Lambda(\mathcal J)$ have identical
digits above position $n_h+2$.

For a nonempty intersection write this common tail as
$v=\sum_{r\geq n_h+3}w_rq^{r-1}$. For every selected $\lambda\in\Lambda(\mathcal{J})\cap B(x,h)$,
\[
 \lambda-v\in T_{n_h+2}(\mathcal J_{n_h+2}),\qquad
                 \lambda-v\in B(x-v,h).
\]
So
\[
 \#(B(x,h)\cap\Lambda(\mathcal J))
 \leq\#(B(x-v,h)\cap T_{n_h+2}(\mathcal J_{n_h+2}))
 \leq M_{T_{n_h+2}(\mathcal J_{n_h+2})}(h).
\]
Taking the
supremum over $x$ gives one direction of \eqref{(4.1)}.
The other direction follows from
$T_{n_h+2}(\mathcal J_{n_h+2})\subset\Lambda(\mathcal J)$.

For a nonempty finite set the possible interval counts belong to a
finite set of integers, so their supremum is attained.
If a maximizing center lies outside the convex hull of the selected
points, move it to the nearest endpoint of that hull. This only
reduces its distance from the selected points. The new center remains
maximizing and lies in $[0,q^{n_h+2})$, since all the finite digit
values lie in that interval.
\end{proof}

\begin{re}\label{re4.2}
For $\mathcal J_n=\Omega^n$, Lemma \ref{thm4.1} gives
$M_\Gamma(h)=M_{\Gamma_{n_h+2}}(h)$.
\end{re}

\begin{lem}\label{thm4.2}
There is a constant $C=C(q,m)$ such that
$M_\Gamma(h)\leq C h^{r_0}$ for all $h\geq1$. In particular,
\[
                        1\leq D_{r_0}^+(\Gamma)<\infty.
\]
For every $\varepsilon>0$, there is $H_\varepsilon$ such that
$M_\Gamma(h)/h^{r_0}\leq D_{r_0}^+(\Gamma)+\varepsilon$
for $h\geq H_\varepsilon$.
\end{lem}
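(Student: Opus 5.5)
Throughout, $n_h$ is the integer with $q^{n_h}\leq 2h<q^{n_h+1}$, as in Lemma \ref{thm4.1}. The plan is to reduce everything to a count on the finite set $\Gamma_{n_h+2}$ via Remark \ref{re4.2}, bound that count crudely, and then deduce the three assertions. The order is: upper bound, then lower bound, then the uniform tail statement.

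\textbf{Upper bound.} Remark \ref{re4.2} gives $M_\Gamma(h)=M_{\Gamma_{n_h+2}}(h)\leq\#\Gamma_{n_h+2}$. The values $T_n(J)$ are distinct, since the digits lie in $\{0,\dots,q-2\}$ and base-$q$ expansions are unique. Hence $\#\Gamma_n=m^n$, and
\[
 M_\Gamma(h)\leq m^{n_h+2}=m^2\,(q^{n_h})^{r_0}\leq m^2(2h)^{r_0}.
\]
So $C=m^2 2^{r_0}$ works, and it follows that $D_{r_0}^+(\Gamma)\leq C<\infty$.

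\textbf{Lower bound.} Consider $h=q^n/2$ with $n\geq1$. The set $\Gamma_n$ lies in $[0,q^n)$. Center the ball at $x=q^n/2$; the ball $B(q^n/2,q^n/2)=(0,q^n)$ contains every element of $\Gamma_n$ except $0$.

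To catch $0$ as well, use the slightly larger radius $h'=(q^n+1)/2$. This radius satisfies $h'/h\to1$. Then $M_\Gamma(h')\geq m^n=(2h)^{r_0}$, so along this sequence
\[
 \frac{M_\Gamma(h')}{h'^{r_0}}\longrightarrow 2^{r_0}\geq 1.
\]
This gives $D_{r_0}^+(\Gamma)\geq 2^{r_0}\geq1$, which is the claimed lower bound.

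\textbf{Tail statement.} The final assertion is just the definition of $\limsup$ applied to a finite limsup. Since $D_{r_0}^+(\Gamma)<\infty$, for each $\varepsilon>0$ there is $H_\varepsilon$ with $\sup_{h\geq H_\varepsilon}M_\Gamma(h)/h^{r_0}\leq D_{r_0}^+(\Gamma)+\varepsilon$.

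The only point needing care is the bookkeeping with $n_h$ in the upper bound, namely turning $m^{n_h+2}$ into a power of $h$. This is handled by writing $m^{n_h}=(q^{n_h})^{r_0}\leq(2h)^{r_0}$, which uses $r_0=\log m/\log q$. No real obstacle is expected.
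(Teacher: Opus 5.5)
Your proposal is correct and essentially matches the paper's proof: the same reduction via Remark \ref{re4.2} to $M_\Gamma(h)=M_{\Gamma_{n_h+2}}(h)\le m^{n_h+2}\le m^2(2h)^{r_0}$, and the tail assertion read off from the definition of a finite $\limsup$ (the paper phrases it as a contradiction, to the same effect). The only difference is your lower-bound ball $B(q^n/2,(q^n+1)/2)$, which is tighter than the paper's $B(0,q^n)$ and yields the stronger $D_{r_0}^+(\Gamma)\ge 2^{r_0}$; since you only showed $M_\Gamma(h')\ge m^n$, write that step as $M_\Gamma(h')/h'^{r_0}\ge m^n/h'^{r_0}\to 2^{r_0}$ rather than as convergence of the quotient itself.
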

\begin{proof}
For $h\geq1$ choose the integer $n_h\geq0$ with
$q^{n_h}\leq2h<q^{n_h+1}$. Lemma \ref{thm4.1} for the full word set
gives $M_\Gamma(h)=M_{\Gamma_{n_h+2}}(h)\leq m^{n_h+2}$.
Since $q^{r_0}=m$, we obtain the pointwise estimate
\[
 M_\Gamma(h)\leq m^2q^{n_hr_0}
                    \leq 2^{r_0}m^2h^{r_0}.
\]
This proves the upper bound with $C=2^{r_0}m^2$.
For the lower bound note that all $m^n$ elements of $\Gamma_n$
belong to $B(0,q^n)$, because their values are nonnegative and
strictly less than $q^n$. Hence
\[
 \frac{M_\Gamma(q^n)}{(q^n)^{r_0}}
 \geq\frac{\#\Gamma_n}{q^{nr_0}}=\frac{m^n}{m^n}=1.
\]
Taking limsup along $h=q^n\to\infty$ proves
$D_{r_0}^+(\Gamma)\geq1$.

Finally put $D_*=D_{r_0}^+(\Gamma)<\infty$.
If the last assertion were false for some $\varepsilon>0$, one could
choose $h_j\geq j$ with
$M_\Gamma(h_j)/h_j^{r_0}>D_*+\varepsilon$. The limsup along this
sequence, and hence the full limsup, would be at least
$D_*+\varepsilon$, a contradiction.
\end{proof}

\begin{lem}\label{thm4.3}
Let $\beta,\gamma$ be integers with $0\leq\beta<\gamma$, and let
$S\subset\Gamma_\beta$ with $P=\#S$.
Put
\[
 F=S+q^\beta\Gamma_{\gamma-\beta},\qquad\theta=P/m^\beta.
\]
Then every interval $U$ satisfies
\begin{equation}\tag{4.2}\label{(4.2)}
 \left|\#(F\cap U)-\theta\#(\Gamma_\gamma\cap U)\right|\leq2P.
\end{equation}
Consequently $|M_F(h)-\theta M_{\Gamma_\gamma}(h)|\leq2P$
for every $h>0$.
\end{lem}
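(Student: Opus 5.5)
We decompose $\Gamma_\gamma$ according to the low digits. Every element of $\Gamma_\gamma$ is uniquely $a+q^\beta g$ with $a\in\Gamma_\beta$ and $g\in\Gamma_{\gamma-\beta}$. Uniqueness holds because the digits lie in $\{0,\dots,q-2\}$, so the base-$q$ representation is unique. Hence
\[
 \#(\Gamma_\gamma\cap U)=\sum_{a\in\Gamma_\beta}N_a,\qquad
 \#(F\cap U)=\sum_{a\in S}N_a,
\]
where $N_a:=\#\bigl((a+q^\beta\Gamma_{\gamma-\beta})\cap U\bigr)=\#\bigl(q^\beta\Gamma_{\gamma-\beta}\cap(U-a)\bigr)$. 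Both sums are disjoint unions of translates of one fixed set $G:=q^\beta\Gamma_{\gamma-\beta}$. The key point is that the translations $a$ all lie in $[0,q^\beta)$, and these are small compared with the gaps of $G$.

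Next, compare $N_a$ with $N_0$ for every $a\in[0,q^\beta)$. Distinct points of $G$ differ by at least $q^\beta$; this is the separation estimate from the proof of Lemma \ref{thm4.1}, scaled by $q^\beta$. Shifting an interval $U=(\alpha,\omega)$ by $a\in[0,q^\beta)$ changes the count of $G$ only near the two endpoints. Any point in the symmetric difference of $U\cap G$ and $(U-a)\cap G$ lies in $[\alpha-a,\alpha]$ or $[\omega-a,\omega]$ (up to endpoint conventions). Each of these is a half-open window of length less than $q^\beta$, so it contains at most one point of $G$. Therefore $|N_a-N_0|\leq 1$, and in fact more precisely $N_0-1\leq N_a\leq N_0+1$. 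I expect this endpoint bookkeeping to be the only delicate step. The windows must be taken half-open of length $a<q^\beta$ so that the spacing $\geq q^\beta$ gives at most one point, and the open, closed, or half-open nature of $U$ must be handled uniformly.

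Now average. Write $\bar N=m^{-\beta}\sum_{a\in\Gamma_\beta}N_a$, so that $\theta\#(\Gamma_\gamma\cap U)=P\bar N$. Since every $N_a$ lies in $[N_0-1,N_0+1]$, so does $\bar N$, and hence $|N_a-\bar N|\leq 2$ for each $a$. Summing over $a\in S$ gives
\[
\Bigl|\#(F\cap U)-\theta\#(\Gamma_\gamma\cap U)\Bigr|=\Bigl|\sum_{a\in S}(N_a-\bar N)\Bigr|\leq 2P,
\]
which is \eqref{(4.2)}.

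For the consequence, apply \eqref{(4.2)} with $U=B(x,h)$ for each $x$. This gives $\#(F\cap B(x,h))\leq \theta\#(\Gamma_\gamma\cap B(x,h))+2P\leq\theta M_{\Gamma_\gamma}(h)+2P$, and taking the supremum over $x$ bounds $M_F(h)$ from above. For the other direction, choose $x$ attaining $M_{\Gamma_\gamma}(h)$; this is possible since $\Gamma_\gamma$ is finite. Then $M_F(h)\geq\#(F\cap B(x,h))\geq\theta M_{\Gamma_\gamma}(h)-2P$.
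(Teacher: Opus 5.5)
Your proof is correct, but it uses the decomposition dual to the paper's.

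\textbf{The two routes.} The paper groups $\Gamma_\gamma$ by \emph{high}-digit suffixes into blocks $B_v=\Gamma_\beta+q^\beta v$, whose convex hulls are disjoint and ordered. An interval contains every block it meets in full, except possibly the first and last. Each full block contributes a discrepancy of $P-\theta m^\beta=0$, and each of the two partial blocks contributes at most $P$. You instead group by \emph{low}-digit prefixes into translates $a+G$ of the $q^\beta$-separated set $G=q^\beta\Gamma_{\gamma-\beta}$. You show that every fibre count $N_a$ lies within one of $N_0$, and then average.

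\textbf{What each buys.} The paper's route needs only the ordering of the blocks and involves no endpoint bookkeeping. Yours gives uniform information about every fibre, and it can be pushed slightly further (see below).

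\textbf{Make the sign structure explicit.} When you write up $|N_a-N_0|\le1$, state why the two windows cannot both count against you. For an open $U=(\alpha,\omega)$, points of $G$ in the left window lie in $(U-a)\setminus U$, so they can only be gained. Points in the right window lie in $U\setminus(U-a)$, so they can only be lost. Simply bounding the size of the symmetric difference gives only $2$, which would inflate your final constant to $4P$.

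\textbf{A sharper constant.} Pushing this one step further improves the bound. If $\omega-\alpha>q^\beta$, both windows $(\alpha-a,\alpha]$ and $[\omega-a,\omega)$ grow with $a\in[0,q^\beta)$. So the gain and loss indicators are both nondecreasing, and the values $+1$ and $-1$ for $N_a-N_0$ cannot both occur. If $U$ is shorter, every $N_a\in\{0,1\}$. In either case the $N_a$ take at most two consecutive values, so $|N_a-\bar N|\le1$ and your method yields the bound $P$ in place of $2P$. The other endpoint conventions for $U$ are analogous.
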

\begin{proof}
We group the digit words by their high-digit suffixes. Every word
$J$ of length $\gamma$ has a unique decomposition $J=IV$, with
$I\in\Omega^\beta$ and
$V\in\Omega^{\gamma-\beta}$, and
\[
 T_\gamma(IV)=T_\beta(I)+q^\beta T_{\gamma-\beta}(V).
\]
Thus each fixed suffix corresponds to a block
$B_v=\Gamma_\beta+q^\beta v$, $v\in\Gamma_{\gamma-\beta}$.
The selected points of that block form $S+q^\beta v$ and number $P$.
Because $\Gamma_\beta\subset[0,q^\beta)$ and distinct integer $v$
differ by at least one, the convex hulls of these blocks are disjoint
and ordered in the same way as $v$.

Fix an interval $U$. Let $\mathcal V$ be the set of suffix values
whose full blocks meet $U$, and write $K=\#\mathcal V$.
If $K\geq2$, only the first and last of these blocks can be partly
included: every point of any intermediate block lies between two
points of $U$, and hence belongs to $U$. If $K=1$, there is only one
possibly partial block. For $K=0$ both counts are zero.

We now compare the two counts block by block. On a full block their difference is
$P-\theta m^\beta=0$. On a partial block, both
\[
 \#((S+q^\beta v)\cap U)
 \quad\text{and}\quad\theta\#(B_v\cap U)
\]
lie between zero and $P$, so the absolute value of their difference
is at most $P$. Summing over at most two partial blocks proves
\eqref{(4.2)}. Equivalently,
\[
 \theta\#(\Gamma_\gamma\cap U)-2P
 \leq\#(F\cap U)
 \leq\theta\#(\Gamma_\gamma\cap U)+2P.
\]

For every $x\in\mathbb R$, \eqref{(4.2)} gives
\[
\#(F\cap B(x,h))
\leq \theta M_{\Gamma_\gamma}(h)+2P.
\]
Taking the supremum over $x$, we obtain
\[
M_F(h)\leq \theta M_{\Gamma_\gamma}(h)+2P.
\]
Conversely, \eqref{(4.2)} also gives, for every $x\in\mathbb R$,
\[
\theta\#(\Gamma_\gamma\cap B(x,h))
\leq M_F(h)+2P.
\]
Since $\theta\geq0$, taking the supremum over $x$ yields
\[
\theta M_{\Gamma_\gamma}(h)\leq M_F(h)+2P.
\]
Combining these inequalities proves the lemma.
\end{proof}

A finite subset of $\Gamma_g$ is called \emph{closed under digit
truncation} if deleting any number of its most significant digits
keeps the resulting number in the set. 

\begin{lem}[Finite-stage calibration]\label{thm4.4}
Fix $0<s<r_0$, an integer $g\geq2$, and a finite set
$F_-\subset\Gamma_g$ containing zero and closed under digit truncation.
Put $N=\#F_-$ and $H=q^g$. Given $\tau>0$ and $e>0$, arbitrarily large
integers $\gamma$ admit integers $\alpha<\beta<\gamma$, an integer
$A\geq2$, and a set $S\subset\Gamma_\beta$ with $\#S=P=AN$ such that,
for $F=S+q^\beta\Gamma_{\gamma-\beta}$,
\begin{enumerate}[(i)]
\item $\alpha>g+3$, $F_-\subset F$, $F$ is closed under digit
truncation, and retaining only the first $\alpha$ digits of any
element of $F$ gives an element of $F_-$;
\item with $z=\sup_{h\geq H}M_{\Gamma_\gamma}(h)/h^s$ and
$\theta=P/m^\beta$,
$\tau-e/2\leq\theta z\leq\tau$;
\item $\#F/q^{s\gamma}\leq\tau$, and
\[
 \tau-e\leq\sup_{H\leq h\leq q^\gamma}\frac{M_F(h)}{h^s}
                  \leq\max\{N/H^s,\tau\}+e.
\]
\end{enumerate}
Moreover, $\alpha$ can exceed any prescribed finite bound, and
$\beta\geq c\gamma$ and $\beta-\alpha\asymp\log\beta$ for sufficiently
large $\gamma$, where one may take $c=(r_0-s)/(2r_0)>0$.
\end{lem}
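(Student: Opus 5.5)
The plan is to build $S$ as a ``two-level'' set: a low block reproducing $F_-$ and a middle block of $A$ carefully counted digit values. Concretely, $S=F_-+q^\alpha G$, where $G\subset\Gamma_{\beta-\alpha}$ is the set of the $A$ smallest elements of $\Gamma_{\beta-\alpha}$ (in increasing numerical order) and $A\ge 2$ is an integer to be fixed below. Since $F_-\subset\Gamma_g\subset[0,q^g)$ and $\alpha>g+3$, the map $(f,w)\mapsto f+q^\alpha w$ is injective, so $P=\#S=AN$.

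Deleting top digits of an element of $\Gamma$ lowers its value and keeps it in $\Gamma$. Hence an initial segment of $\Gamma_{\beta-\alpha}$ is closed under digit truncation, and $0\in G$. An element of $F$ has the form $f+q^\alpha w+q^\beta v$. Deleting its top digits yields either $f+q^\alpha w+q^\beta v'$ with $v'$ a truncation of $v$, or $f+q^\alpha w'$ with $w'$ a truncation of $w$, or a truncation of $f$. All of these lie in $F$ because $0\in G$ and $0\in\Gamma_{\gamma-\beta}$. Keeping the first $\alpha$ digits returns $f\in F_-$, and $F_-\subset F$ (take $w=v=0$). This settles (i) once the sizes are fixed.

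Next comes the calibration (ii). Since $s<r_0$, Lemma \ref{thm4.2} and the trivial count $M_{\Gamma_\gamma}(q^\gamma)=m^\gamma$ give $m^\gamma q^{-s\gamma}\le z\le C\,m^\gamma q^{-s\gamma}$ once $\gamma$ is large. Indeed $M_{\Gamma_\gamma}(h)\le\min\{m^\gamma,Ch^{r_0}\}$, so $M_{\Gamma_\gamma}(h)/h^s$ is essentially maximized near $h\asymp q^\gamma$. Put $A=\lfloor \tau m^\beta/(Nz)\rfloor$, so $\theta z\le\tau$ and $\theta z\ge\tau(1-1/A)$. We choose $\beta$ to make $A$ lie in $[K,mK]$ for a prescribed $K\ge 2\tau/e$; this is possible since multiplying $m^\beta$ by $m$ multiplies the target by $m$. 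Taking $K\asymp\beta$ (say $K=\beta$) gives $\theta z\ge\tau-e/2$. Moreover $\beta=\gamma(1-s/r_0)+O(\log\gamma)\ge c\gamma$. Finally set $\alpha=\beta-\lceil\log_m A\rceil-1$, so that $m^{\beta-\alpha}\ge A$ (hence $G$ is defined) and $\beta-\alpha\asymp\log\beta$. Since $\alpha\ge c\gamma-O(\log\gamma)$, taking $\gamma$ large forces $\alpha$ beyond any bound, in particular $\alpha>g+3$.

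For (iii), first $\#F=Pm^{\gamma-\beta}=\theta m^\gamma\le\theta z q^{s\gamma}\le\tau q^{s\gamma}$. The two-sided bound on the sup over $H\le h\le q^\gamma$ is the main obstacle. I would split it into three ranges of $h$.

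\begin{enumerate}[(a)]
\item $h\ge q^{\beta}$. Lemma \ref{thm4.3} gives $|M_F(h)-\theta M_{\Gamma_\gamma}(h)|\le 2AN$. Since $A=O(\beta)$ while $q^{s\beta}$ grows exponentially, $2AN/h^s\le e/2$ for large $\gamma$. This yields the upper bound $\tau+e/2$ in this range. It also yields the lower bound $\tau-e$: evaluate at a near-maximizer of $M_{\Gamma_\gamma}(h)/h^s$, which lies at $h\asymp q^\gamma\ge q^\beta$.
\item $H\le h<q^\alpha/4$. Points of $F$ in one ball of radius $h$ differ only in their low $\alpha$ digits, and those digits lie in $F_-$. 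So the count is at most $N$, giving $M_F(h)/h^s\le N/H^s$.
\item $q^\alpha/4\le h<q^\beta$. A ball meets at most boundedly many $q^\beta$-blocks. Within one block, the $w$-coordinates lie in a window of radius $O(h/q^\alpha)$, so Lemma \ref{thm4.2} bounds the count by $O\bigl(N(1+(h/q^\alpha)^{r_0})\bigr)$, and also by $O(NA)$. Dividing by $h^s$ gives at most $O(Nq^{-s\alpha})+O(NAq^{-s\beta})$ plus the term $N C h^{r_0-s}q^{-\alpha r_0}\le N C m^{\beta-\alpha}q^{-s\beta}=O(NAq^{-s\beta})$. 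All of these tend to $0$ as $\gamma\to\infty$, hence are below $e$.
\end{enumerate}

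The delicate points are the bookkeeping at the range boundaries and keeping $A$ polynomial in $\beta$, so that the $2P$ error in Lemma \ref{thm4.3} stays negligible at scale $q^\beta$.
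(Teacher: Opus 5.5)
Your proposal is correct and follows essentially the paper's own route: the same set $S=F_-+q^\alpha Q_A$ with $Q_A$ the $A$ smallest elements of $\Gamma_{\beta-\alpha}$, the same floor calibration $A=\lfloor\tau m^\beta/(Nz)\rfloor$ with $A\asymp\beta$, and the same three-range split of radii, using Lemma \ref{thm4.3} at the top scale and a near-maximizer at $h\asymp q^\gamma$ for the lower bound. The differences are only bookkeeping (the paper fixes $\beta$ by solving $(b+1)zN/m^b=\tau$, and uses cutoffs $q^\alpha/(4q)$, $q^\beta/(4q)$ with the cruder one-copy-of-$S$ bound in the middle range); two points need tightening: make your self-referential choice $K=\beta$ precise, e.g.\ take the least $\beta$ with $\tau m^\beta/(Nz)\ge\beta$, and note that your range (b) cutoff $q^\alpha/4$ relies on the separation $q^\alpha-q^g$ supplied by the zero block, since the generic bound $q^\alpha/(q-1)$ from Lemma \ref{thm4.1} is too weak when $q\ge4$.
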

\begin{proof}
We first choose a real parameter and then round it to obtain an
integer number of selected prefixes.
The parameter $\beta$ determines the proportion of selected prefixes,
and $\alpha$ determines the length of the intervening zero block.
The digits in positions $\beta+1,\ldots,\gamma$ are unrestricted. Throughout this proof $g,N,\tau,e$ are fixed and only
$\gamma$ is allowed to tend to infinity. Constants in this proof may
depend on these fixed data.

\smallskip\noindent\textbf{Step 1: estimate the full-block counting quotient.}
Since $\Gamma_\gamma\subset B(0,q^\gamma)$ and
$\#\Gamma_\gamma=m^\gamma$, we have
\[
 z\geq\frac{M_{\Gamma_\gamma}(q^\gamma)}{q^{s\gamma}}
          =q^{(r_0-s)\gamma}.
\]
For $H\leq h\leq q^\gamma$, Lemma \ref{thm4.2} gives
\[
 \frac{M_{\Gamma_\gamma}(h)}{h^s}
 \leq\frac{M_\Gamma(h)}{h^s}
 \leq Ch^{r_0-s}\leq Cq^{(r_0-s)\gamma}.
\]
For $h\geq q^\gamma$ all points fit inside an interval of radius $h$,
so the quotient is $m^\gamma/h^s$ and is decreasing. Therefore
\[
 q^{(r_0-s)\gamma}\leq z\leq Cq^{(r_0-s)\gamma},\qquad
 z=\sup_{H\leq h\leq q^\gamma}\frac{M_{\Gamma_\gamma}(h)}{h^s}.
\]
In particular $\lim_{\gamma\rightarrow\infty}z=\infty$, since $r_0-s>0$.

\smallskip\noindent\textbf{Step 2: choose a real parameter, then round it.}
For this fixed $\gamma$ consider the continuous function
\[
                 G_\gamma(x)=\frac{(x+1)zN}{m^x}.
\]
At $x=g+2$ its value tends to infinity as $\gamma\to\infty$.
At the other endpoint,
\[
 G_\gamma(\gamma)
 \leq CN(\gamma+1)q^{(r_0-s)\gamma-r_0\gamma}
 =CN(\gamma+1)q^{-s\gamma}\longrightarrow0.
\]
For all sufficiently large $\gamma$, these values lie respectively
above and below $\tau$. The intermediate value theorem gives
$b\in(g+2,\gamma)$ with
\[
                  \frac{(b+1)zN}{m^b}=\tau.
\]
Taking logarithms to base $q$ yields the exact identity
\[
 r_0 b=\log_q z+\log_q(b+1)+\log_q(N/\tau).
\]
Step 1 gives $\log_q z=(r_0-s)\gamma+O(1)$.
As $g+2<b<\gamma$, the other variable term is $O(\log\gamma)$.
Hence, with $a=(r_0-s)/r_0\in(0,1)$,
\[
                         b=a\gamma+O(\log\gamma).
\]
Here $O(\log\gamma)$ means that the absolute value of the error is
at most a fixed constant times $\log\gamma$ for large $\gamma$.

Put
\[
 \beta=\lfloor b\rfloor,\qquad
 \kappa=\frac{\tau m^\beta}{zN},\qquad A=\lfloor\kappa\rfloor.
\]
The symbols $\lfloor x\rfloor$ and $\lceil x\rceil$ denote the greatest
integer at most $x$ and the least integer at least $x$, respectively.
The defining equation for $b$ gives
$\kappa=(b+1)m^{\beta-b}$. Since $b-1<\beta\leq b$,
\[
 \frac{b+1}{m}\leq\kappa\leq b+1,
 \qquad \frac{b+1}{m}-1\leq A\leq b+1.
\]
In particular $A\geq2$ for large $\gamma$.
Using $0\leq\kappa-A<1$, we obtain
\[
\begin{split}
 0\leq\tau-\frac{ANz}{m^\beta}
 &=(\kappa-A)\frac{Nz}{m^\beta}
 <\frac{Nz}{m^\beta}=\frac{\tau m^{b-\beta}}{b+1}
 \leq\frac{m\tau}{b+1}\longrightarrow0.
\end{split}
\]
Increase $\gamma$ until this error is below $e/2$.
This proves (ii) once the set of $AN$ prefixes is constructed.
The threshold for $\gamma$ may depend on $\tau$ and $e$, which are
fixed throughout this construction.

\smallskip\noindent\textbf{Step 3: construct the words with the exact count.}
Set $d=\lceil\log_m A\rceil$ and $\alpha=\beta-d$.
Then $m^d\geq A$, so $\Gamma_d$ contains at least $A$ elements.
Let $Q_A$ consist of its $A$ smallest elements and define
\[
 S=F_-+q^\alpha Q_A,
 \qquad F=S+q^\beta\Gamma_{\gamma-\beta}.
\]
The estimates in Step 2 imply
\[
 \beta=a\gamma+O(\log\gamma),\qquad
 d=\log_m\gamma+O(1),\qquad
 \alpha=a\gamma+O(\log\gamma).
\]
Thus $\alpha>g+3$ and $\alpha<\beta<\gamma$ for large $\gamma$;
$\alpha$ can exceed any fixed lower bound. Also
$\beta\geq(a/2)\gamma=c\gamma$ and $d\asymp\log\beta$.
The latter notation means that $d/\log\beta$ stays between two
positive constants for all sufficiently large $\gamma$.

The digit blocks of an element of $F$ occupy disjoint positions:
the first $g$ positions contain a word from $F_-$; positions
$g+1,\ldots,\alpha$ are zero; the next $d$ positions give a word
from $Q_A$; and positions $\beta+1,\ldots,\gamma$ are arbitrary.
Base-$q$ expansion is unique, so these sums have no repeated elements.
In particular
\[
             \#S=AN=P,\qquad\#F=ANm^{\gamma-\beta}.
\]
Both $Q_A$ and the free block contain zero, so $F_-\subset F$.
Deleting high digits from an element of $Q_A$ decreases its value
and keeps it in $\Gamma_d$, hence in the first $A$ elements $Q_A$.
Together with the corresponding property of $F_-$, this proves
closure under digit truncation for $F$: a cut in the free block
leaves a free word, a cut in the selected block leaves a word in
$Q_A$, and a cut in the lowest $g$ positions leaves an element of
$F_-$. A cut in the intervening zero block simply leaves the old
word. All assertions in (i) follow.

\smallskip\noindent\textbf{Step 4: control the endpoint error.}
Set
\[
                     R=\frac{q^\alpha}{4q},\qquad
                     R'=\frac{q^\beta}{4q}.
\]
For large $\gamma$, $H<R<R'<q^\gamma$.
Since $A\leq b+1\leq\gamma+1$, we have $P\leq N(\gamma+1)$.
Also $\alpha/\gamma\to a>0$, so eventually $\alpha\geq a\gamma/2$.
Consequently
\[
 \frac{2P}{R^s}
 \leq2N(\gamma+1)(4q)^sq^{-sa\gamma/2}\longrightarrow0.
\]
Choose $\gamma$ large enough that $2P/R^s<e/4$.

\smallskip\noindent\textbf{Step 5: upper bounds in three ranges of radii.}
If $H\leq h<R$, two points coming from different translated copies
of $F_-$ must differ in some digit above $\alpha$. The highest such
digit and the separation estimate in Lemma \ref{thm4.1} give distance
at least $q^\alpha/(q-1)>2R>2h$. Thus an interval of radius $h$
meets only one copy of $F_-$, and
\[
                         \frac{M_F(h)}{h^s}\leq\frac{N}{H^s}.
\]
If $R\leq h<R'$, the same argument above position $\beta$ shows
that an interval meets at most one translated copy of $S$.
Therefore
\[
                         \frac{M_F(h)}{h^s}\leq\frac{P}{R^s}<e/8.
\]
If $R'\leq h\leq q^\gamma$, Lemma \ref{thm4.3} and Steps 1--2 give
\[
\begin{split}
 \frac{M_F(h)}{h^s}
 &\leq\theta\frac{M_{\Gamma_\gamma}(h)}{h^s}+\frac{2P}{h^s}\leq\theta z+\frac{2P}{R^s}\leq\tau+e/4.
\end{split}
\]
Combining the three estimates proves the upper bound in (iii).

\smallskip\noindent\textbf{Step 6: find a radius giving the lower bound.}
We must first show that the supremum defining $z$ can be taken
over $[R,q^\gamma]$. Lemma \ref{thm4.2} and Step 1 imply
\[
 \frac{1}{z}\sup_{H\leq h<R}
       \frac{M_{\Gamma_\gamma}(h)}{h^s}
 \leq C(4q)^{-(r_0-s)}q^{-(r_0-s)(\gamma-\alpha)}
 \longrightarrow0.
\]
Indeed, Step 3 gives $\alpha/\gamma\to a=(r_0-s)/r_0<1$,
so $\gamma-\alpha\to\infty$. For sufficiently large $\gamma$,
the supremum over $[H,R)$ is therefore strictly smaller than $z$.
Since Step 1 identifies $z$ with the supremum over $[H,q^\gamma]$,
we conclude that
\[
 z=\sup_{R\leq h\leq q^\gamma}
                 \frac{M_{\Gamma_\gamma}(h)}{h^s}.
\]
Now the definition of supremum and $\theta=P/m^\beta>0$ yield
$h_*\in[R,q^\gamma]$ such that
\[
 \theta\frac{M_{\Gamma_\gamma}(h_*)}{h_*^s}>\theta z-e/4.
\]
This choice uses only the definition of supremum and does not require
that it be attained.
The lower inequality from Lemma \ref{thm4.3} now gives
\[
\begin{split}
 \frac{M_F(h_*)}{h_*^s}
 &\geq\theta\frac{M_{\Gamma_\gamma}(h_*)}{h_*^s}
                               -\frac{2P}{h_*^s}>\theta z-e/4-e/4
 \geq\tau-e.
\end{split}
\]
This proves the required lower bound.

Finally the exact count in Step 3 gives
\[
 \frac{\#F}{q^{s\gamma}}
 =\frac{Pm^{\gamma-\beta}}{q^{s\gamma}}
 =\theta q^{(r_0-s)\gamma}\leq\theta z\leq\tau.
\]
All the requirements hold simultaneously for every sufficiently
large $\gamma$, completing the proof.
\end{proof}


\begin{lem}\label{thm4.6}
Let $s>0$. Suppose, for all sufficiently large $h$,
$M_\Lambda(h)/h^s\leq C\log h$, and suppose
\[
       \limsup_{h\to\infty}(\log h)\frac{M_\Lambda(h)}{h^s}>0.
\]
Then $\dim_{\mathrm{Be}}\Lambda=s$.
\end{lem}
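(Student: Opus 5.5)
The plan is to bound the two definitions of Beurling dimension separately. Recall
\[
 \dim_{\mathrm{Be}}\Lambda=\inf\{u>0:D_u^+(\Lambda)=0\}=\sup\{u>0:D_u^+(\Lambda)>0\}.
\]
So it suffices to prove two things: $D_u^+(\Lambda)=0$ for every $u>s$, and $D_u^+(\Lambda)>0$ (indeed $=+\infty$) for every $0<u<s$. Both follow by comparing $M_\Lambda(h)/h^u$ with $M_\Lambda(h)/h^s$ through the factor $h^{s-u}$, which beats any power of $\log h$.

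\textbf{Upper bound.} Fix $u>s$. For all sufficiently large $h$ the first hypothesis gives
\[
 \frac{M_\Lambda(h)}{h^u}=h^{s-u}\frac{M_\Lambda(h)}{h^s}\leq C\,h^{s-u}\log h .
\]
The right-hand side tends to zero because $u-s>0$. Hence $D_u^+(\Lambda)=0$, and therefore $\dim_{\mathrm{Be}}\Lambda\leq s$.

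\textbf{Lower bound.} Fix $0<u<s$. By the second hypothesis there are $c>0$ and radii $h_j\to\infty$ with
\[
 (\log h_j)\,\frac{M_\Lambda(h_j)}{h_j^s}\geq c .
\]
Along this sequence,
\[
 \frac{M_\Lambda(h_j)}{h_j^u}=h_j^{s-u}\frac{M_\Lambda(h_j)}{h_j^s}\geq c\,\frac{h_j^{s-u}}{\log h_j}\longrightarrow\infty .
\]
Thus $D_u^+(\Lambda)=+\infty>0$ for every $u<s$, so $\dim_{\mathrm{Be}}\Lambda\geq s$. Combining the two bounds gives $\dim_{\mathrm{Be}}\Lambda=s$.

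The only point needing care is the monotonicity in $u$ of $D_u^+$ that the two-sided definition presupposes. It is automatic here: for $u<u'$ one has $M_\Lambda(h)/h^{u'}\leq M_\Lambda(h)/h^{u}$ when $h\geq1$, so the set $\{u:D_u^+(\Lambda)=0\}$ is an up-set. We have shown it contains $(s,\infty)$ and misses $(0,s)$, so the infimum and the supremum in the definition both equal $s$. This shows the value at $u=s$ itself is irrelevant.

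I do not expect a real obstacle. The mildest subtlety is that the second hypothesis is only a limsup, so the lower bound must be taken along a subsequence rather than for all large $h$; the limsup defining $D_u^+$ accommodates this directly.
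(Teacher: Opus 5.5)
Your proposal is correct and follows the paper's proof essentially step for step. For $u>s$ you use the bound $C h^{s-u}\log h\to0$, and for $0<u<s$ you pick radii $h_j$ with $M_\Lambda(h_j)/h_j^u\geq c\,h_j^{s-u}/\log h_j\to\infty$; your added remark on the monotonicity of $D_u^+$ in $u$ (for $h\geq1$) is a harmless extra justification that the paper leaves implicit.
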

\begin{proof}
We prove the upper and lower dimension bounds separately.
For any $u>s$, the assumed upper estimate gives, for all large $h$,
\[
 0\leq\frac{M_\Lambda(h)}{h^u}
   =\frac{M_\Lambda(h)}{h^s}h^{s-u}
   \leq C\frac{\log h}{h^{u-s}}\longrightarrow0.
\]
Thus $D_u^+(\Lambda)=0$ for every $u>s$ and
$\dim_{\mathrm{Be}}\Lambda\leq s$.

For the reverse inequality, a positive limsup means that there are
$c>0$ and a sequence $h_j\to\infty$ satisfying
\[
                  \frac{M_\Lambda(h_j)}{h_j^s}\geq\frac{c}{\log h_j}.
\]
If the limsup is finite, take $c$ smaller than it; if infinite, any
fixed positive $c$ suffices. 
For $0<u<s$ it follows that
\[
 \frac{M_\Lambda(h_j)}{h_j^u}
 =\frac{M_\Lambda(h_j)}{h_j^s}h_j^{s-u}
 \geq\frac{c h_j^{s-u}}{\log h_j}\longrightarrow\infty.
\]
The limsup over all radii is at least the limsup over this sequence,
so $D_u^+(\Lambda)=\infty$ for every $0<u<s$.
Hence $\dim_{\mathrm{Be}}\Lambda\geq s$, completing the proof.
\end{proof}

\begin{thm}\label{thm4.7}
For every $0<s<r_0$ and $\zeta\in[0,+\infty]$, there is a prefix-closed
increasing word sequence $(\mathcal J_n)$ such that
\[
 \dim_{\mathrm{Be}}\Lambda(\mathcal J)=s,
                 \qquad D_s^+(\Lambda(\mathcal J))=\zeta.
\]
It may be constructed with integers
$\gamma_{k-1}<\alpha_k<\beta_k<\gamma_k$ satisfying
$\gamma_{k-1}=o(\gamma_k)$, $\beta_k\geq c\gamma_k$,
$\beta_k-\alpha_k\asymp\log\beta_k$, and
\[
 \mathcal J_{\alpha_k}
       =\mathcal J_{\gamma_{k-1}}0^{\alpha_k-\gamma_{k-1}},
 \qquad
 \mathcal J_{\gamma_k}
       =\mathcal J_{\beta_k}\Omega^{\gamma_k-\beta_k}.
\]
At stage $k$, $\alpha_k$ can exceed any finite lower bound specified
after the preceding stage. More precisely, put
\[
 \tau_k=\begin{cases}
 \zeta,&0<\zeta<\infty,\\
 1/\log_m(k+1),&\zeta=0,\\
 \log_m(k+1),&\zeta=\infty,
 \end{cases}
 \qquad e_k=\frac{\min\{1,\tau_k\}}{(k+1)^2},\qquad H_k=q^{\gamma_k}.
\]
For $k\geq2$, the sequence can be chosen so that
\begin{equation}\tag{4.3}\label{(4.3)}
 \tau_k-e_k\leq
 \sup_{H_{k-1}\leq h\leq H_k}
       \frac{M_{\Lambda(\mathcal J)}(h)}{h^s}
                \leq\max\{\tau_{k-1},\tau_k\}+e_k.
\end{equation}
\end{thm}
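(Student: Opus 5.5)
We construct the word sequence recursively, one stage at a time, and at each stage we apply Lemma \ref{thm4.4}. Start with $\gamma_0=g_0\geq2$ and $F_0=\{0\}=T_{\gamma_0}(0^{\gamma_0})$, which is trivially closed under digit truncation. Suppose a finite set $F_{k-1}\subset\Gamma_{\gamma_{k-1}}$ has been built, containing zero and closed under truncation. We apply Lemma \ref{thm4.4} with $g=\gamma_{k-1}$, $F_-=F_{k-1}$, $H=H_{k-1}=q^{\gamma_{k-1}}$, $\tau=\tau_k$ and $e=e_k$. We choose $\gamma=\gamma_k$ so large that the following hold:
\begin{itemize}
\item $\gamma_{k-1}\leq\gamma_k/k$;
\item $\alpha_k$ exceeds the prescribed lower bound;
\item $N_{k-1}/H_{k-1}^s=\#F_{k-1}/q^{s\gamma_{k-1}}$ is at most $\tau_{k-1}$.
\end{itemize}
The last inequality is exactly the first part of (iii) at the previous stage. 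The lemma then gives $F_k=S_k+q^{\beta_k}\Gamma_{\gamma_k-\beta_k}$ with the stated constraints on $\beta_k$ and $\alpha_k$.

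Next we define $\mathcal J_n$ for $\gamma_{k-1}<n\leq\gamma_k$ as the set of length-$n$ prefixes of the words representing $F_k$. Because $F_k$ is closed under digit truncation, these prefixes have values in $F_k$. Property (i) says the digits between positions $\gamma_{k-1}+1$ and $\alpha_k$ vanish, which yields $\mathcal J_{\alpha_k}=\mathcal J_{\gamma_{k-1}}0^{\alpha_k-\gamma_{k-1}}$. The free top block yields $\mathcal J_{\gamma_k}=\mathcal J_{\beta_k}\Omega^{\gamma_k-\beta_k}$. We then verify the axioms of Definition \ref{de3.1}:
\begin{itemize}
\item Prefix closure is built in.
\item $\mathcal J_n0\subset\mathcal J_{n+1}$ follows from truncation closure together with $0\in$ every free block.
\item $(\Omega^n\setminus\mathcal J_n)0\cap\mathcal J_{n+1}=\varnothing$ follows from prefix closure, since the length-$n$ prefix of $J0$ is $J$.
\end{itemize}
Finally $T_{\gamma_k}(\mathcal J_{\gamma_k})=F_k$, and $\Lambda(\mathcal J)=\bigcup_kF_k$.

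To prove (4.3), we fix $h\in[H_{k-1},H_k]$ and compare $M_{\Lambda(\mathcal J)}(h)$ with $M_{F_k}(h)$. By the separation estimate in Lemma \ref{thm4.1}, a ball of radius $h\leq q^{\gamma_k}$ sees only points sharing the same digits above position $\gamma_k+2$. Those digits lie in the zero block of stage $k+1$, because $\alpha_{k+1}>\gamma_k+3$. Elements of later stages that agree above that position therefore reduce, after truncation, to $F_k$ translated by a common tail. This gives $M_{\Lambda(\mathcal J)}(h)=M_{F_k}(h)$ on this range. Lemma \ref{thm4.4}(iii) then gives the lower bound $\tau_k-e_k$ and the upper bound $\max\{N_{k-1}/H_{k-1}^s,\tau_k\}+e_k\leq\max\{\tau_{k-1},\tau_k\}+e_k$.

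It remains to check dimension and density.
\begin{itemize}
\item \emph{Density.} The radii ranges $[H_{k-1},H_k]$ cover $[H_1,\infty)$, and $e_k\to0$. For $0<\zeta<\infty$ this gives $D_s^+=\zeta$. For $\zeta=0$ the bounds tend to $0$, since $\tau_k\to0$ and $e_k\leq\tau_k$. For $\zeta=\infty$ the lower bounds $\tau_k-e_k\to\infty$ force $D_s^+=\infty$.
\item \emph{Dimension.} We apply Lemma \ref{thm4.6}. On $[H_{k-1},H_k]$ we have $M(h)/h^s\lesssim\tau_k+\tau_{k-1}$. In the case $\zeta=\infty$ this is $\lesssim\log k\leq\log\gamma_{k-1}\lesssim\log h$, using $\gamma_{k-1}\geq k$ and $h\geq q^{\gamma_{k-1}}$. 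In the other cases it is bounded. So $M(h)/h^s\leq C\log h$. For the limsup condition, pick $h_k$ in the $k$-th range with $M(h_k)/h_k^s\geq\tau_k/2$. Then $(\log h_k)\tau_k/2\geq c\gamma_{k-1}/\log_m(k+1)\to\infty$, since $\gamma_{k-1}\geq k$, and this covers even $\zeta=0$.
\end{itemize}

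The main obstacle is the identification $M_{\Lambda(\mathcal J)}(h)=M_{F_k}(h)$ for $h$ up to $H_k=q^{\gamma_k}$. Lemma \ref{thm4.1} only localizes to $T_{n_h+2}$, which can involve digits up to about $\gamma_k+2$, beyond stage $k$. We handle this through the zero gap $\alpha_{k+1}>\gamma_k+3$. Equivalently, we use that $\mathcal J_{\gamma_k+2}=\mathcal J_{\gamma_k}00$, so $T_{\gamma_k+2}(\mathcal J_{\gamma_k+2})=F_k$. For $h<H_k$ we have $n_h+2\leq\gamma_k+2$, and for $n_h+2<\gamma_k$ we use $F_k$ anyway because it contains $T_{n}(\mathcal J_n)$. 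This requires care at the endpoint $h=H_k$, where $n_h$ may equal $\gamma_k$.
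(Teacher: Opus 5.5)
Your proposal is correct and follows essentially the same route as the paper. You apply Lemma~\ref{thm4.4} stagewise with $g=\gamma_{k-1}$, define the word sets as prefixes of $F_k$, identify $M_{\Lambda(\mathcal J)}(h)=M_{F_k}(h)$ for $h\leq H_k$ via the zero block from $\alpha_{k+1}>\gamma_k+3$, bound $\#F_{k-1}/H_{k-1}^s\leq\tau_{k-1}$ from the previous stage, and use Lemma~\ref{thm4.6} for the dimension. For the identification, the paper argues directly that differing tails force a differing digit above position $\alpha_{k+1}$, while you localize with Lemma~\ref{thm4.1} and use $\mathcal J_{\gamma_k+2}=\mathcal J_{\gamma_k}00$; the two are equivalent. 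The only detail you gloss over is that the constants in $\beta_k-\alpha_k\asymp\log\beta_k$ should not depend on $k$. The paper secures this by also enlarging $\gamma_k$ until $\tfrac12\log_m\beta_k\leq\beta_k-\alpha_k\leq2\log_m\beta_k$.
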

\begin{proof}
We construct the word sets stage by stage, then estimate their
interval counts at all large radii. 

\smallskip\noindent\textbf{Step 1: initialization and induction.}
Choose $\gamma_0\geq2$, put $F_0=\{0\}$, and take
$\mathcal J_n=\{0^n\}$ for $0\leq n\leq\gamma_0$;
$0^0$ denotes the empty word.
Suppose the finite set $F_{k-1}\subset\Gamma_{\gamma_{k-1}}$ and
all word sets through length $\gamma_{k-1}$ have been fixed.
Apply Lemma \ref{thm4.4} with
\[
 g=\gamma_{k-1},\quad F_-=F_{k-1},\quad
 N_k=\#F_{k-1},\quad\tau=\tau_k,\quad e=e_k.
\]
Its hypotheses hold initially and are preserved by part (i) of that
lemma. Choose $\gamma_k$ sufficiently large that in addition
\[
 \gamma_k\geq k^2,\qquad
 \gamma_k\geq(k+1)\gamma_{k-1},\qquad
 \alpha_k>\gamma_{k-1}+3,
\]
and any specified finite lower bound on $\alpha_k$ is satisfied.
Since the asymptotic estimates in the lemma are statements for each
fixed stage, enlarge $\gamma_k$ also until
$\beta_k\geq c\gamma_k$ and
$\tfrac12\log_m\beta_k\leq\beta_k-\alpha_k\leq2\log_m\beta_k$. 
Indeed, for each fixed stage, the proof of Lemma \ref{thm4.4}
gives
\[
\beta=a\gamma+O(\log\gamma),\qquad
\beta-\alpha=\log_m\gamma+O(1),
\qquad a=\frac{r_0-s}{r_0}>0.
\]
Consequently,
$(\beta-\alpha)/\log_m\beta\to1$ as $\gamma\to\infty$,
which justifies these bounds.
This gives constants independent of $k$ in the asserted comparison.
Let $d_k=\beta_k-\alpha_k$, and obtain
\[
 S_k=F_{k-1}+q^{\alpha_k}Q_{A_k},\qquad
 F_k=S_k+q^{\beta_k}\Gamma_{\gamma_k-\beta_k}.
\]
Their sizes are $\#S_k=A_kN_k$ and
$\#F_k=A_kN_km^{\gamma_k-\beta_k}$. In particular, for
$H_k=q^{\gamma_k}$, the last assertion of Lemma \ref{thm4.4} gives
\begin{equation}\tag{4.4}\label{(4.4)}
                         \frac{\#F_k}{H_k^s}\leq\tau_k.
\end{equation}
The inequality $\gamma_k\geq(k+1)\gamma_{k-1}$ implies
$\gamma_{k-1}/\gamma_k\leq1/(k+1)\to0$, as required.

\smallskip\noindent\textbf{Step 2: define all intermediate word sets.}
Let $\mathcal Q_k\subset\Omega^{d_k}$ be the words corresponding
to $Q_{A_k}$. At the endpoint set explicitly
\[
 \mathcal J_{\gamma_k}
 =\mathcal J_{\gamma_{k-1}}
       0^{\alpha_k-\gamma_{k-1}}
       \mathcal Q_k\Omega^{\gamma_k-\beta_k}.
\]
Concatenation here means taking every choice from each of the
listed sets of words. Its numerical image is exactly $F_k$.
For $\gamma_{k-1}\leq n\leq\gamma_k$, define $\mathcal J_n$ as the
set of first $n$ digits of words in $\mathcal J_{\gamma_k}$.
Part (i) of Lemma \ref{thm4.4} ensures that at the lower endpoint
this gives the already specified $\mathcal J_{\gamma_{k-1}}$.
Thus consecutive stages agree and no earlier definition is changed.

Every allowed prefix is still allowed at a shorter length, so the
family is prefix closed. Closure of $F_k$ under digit truncation
ensures that, if an allowed word is extended by a zero digit, that
extension also occurs. Explicitly, truncate a word of $F_k$ to the
given prefix and pad the resulting element of $F_k$ by zeros.
This proves $\mathcal J_n0\subset\mathcal J_{n+1}$.
The second inclusion in Definition \ref{de3.1} follows from prefix
closure: if $J0\in\mathcal J_{n+1}$, then $J\in\mathcal J_n$.
 Zero belongs at every
length, so this is an increasing word sequence in the stated sense.

All digits between $\gamma_{k-1}+1$ and $\alpha_k$ are zero, and the
last block is free. Therefore
\[
 \mathcal J_{\alpha_k}
       =\mathcal J_{\gamma_{k-1}}0^{\alpha_k-\gamma_{k-1}},\qquad
 \mathcal J_{\gamma_k}=\mathcal J_{\beta_k}\Omega^{\gamma_k-\beta_k}.
\]
Also $F_{k-1}\subset F_k$ and
$\Lambda(\mathcal J)=\bigcup_k F_k$.

\smallskip\noindent\textbf{Step 3: later stages do not alter counts at earlier radii.}
Fix $k$ and $0<h\leq H_k$. Suppose two points of
$B(x,h)\cap\Lambda(\mathcal J)$ have different tails above position
$\gamma_k$. All their digits from $\gamma_k+1$ through
$\alpha_{k+1}$ are zero, so their highest differing position is
at least $\alpha_{k+1}+1$. Lemma \ref{thm4.1}'s separation estimate
would give distance at least
\[
 \frac{q^{\alpha_{k+1}}}{q-1}>2q^{\gamma_k}=2H_k\geq2h,
\]
contradicting membership in the same interval.
Thus all points share the same tail. Subtracting it puts their
prefixes in $F_k$ by prefix closure and keeps them in an interval
of radius $h$. It follows that
$M_{\Lambda(\mathcal J)}(h)\leq M_{F_k}(h)$.
Since $F_k\subset\Lambda(\mathcal J)$, the reverse inequality holds:
\[
                  M_{\Lambda(\mathcal J)}(h)=M_{F_k}(h)
                                      \quad(0<h\leq H_k).
\]
This equality is why estimates for one finite stage remain valid
for the final infinite set.

\smallskip\noindent\textbf{Step 4: upper and lower bounds on one stage of radii.}
For $k\geq2$ and $H_{k-1}\leq h\leq H_k$, Lemma \ref{thm4.4}(iii)
and Step 3 give
\[
 \tau_k-e_k\leq
 \sup_{H_{k-1}\leq h\leq H_k}
           \frac{M_{\Lambda(\mathcal J)}(h)}{h^s}
 \leq\max\{\#F_{k-1}/H_{k-1}^s,\tau_k\}+e_k.
\]
Use \eqref{(4.4)} from stage $k-1$ in the right side to obtain
\eqref{(4.3)}. 

The definition of supremum also supplies, for every $k\geq2$, a
radius $h_k\in[H_{k-1},H_k]$ with
\[
           \frac{M_{\Lambda(\mathcal J)}(h_k)}{h_k^s}
                                      >\tau_k-2e_k.
\]
As $h_k\geq H_{k-1}\to\infty$, these radii can be used to bound the
limsup from below.

\smallskip\noindent\textbf{Step 5: compute the density in each of the three cases.}
If $0<\zeta<\infty$, all targets equal $\zeta$ and $e_k\to0$.
The upper estimate valid on every stage gives $D_s^+\leq\zeta$;
the radii $h_k$ give $D_s^+\geq\zeta$.

If $\zeta=0$, then $\tau_k\downarrow0$.
On the $k$th stage the upper bound is
$\tau_{k-1}+e_k\to0$. Since the stages cover all sufficiently large
radii and counting quotients are nonnegative, $D_s^+=0$.
At the same time the lower estimates remain positive:
\[
 \tau_k-2e_k\geq\tau_k\left(1-\frac{2}{(k+1)^2}\right)
                      \geq\frac1{2\log_m(k+1)}\qquad(k\geq2).
\]
This quantitative lower bound will preserve the dimension.

If $\zeta=\infty$, then $\tau_k=\log_m(k+1)\to\infty$ and $e_k\to0$.
The quotients at $h_k$ tend to infinity, so $D_s^+=\infty$.
We have proved $D_s^+(\Lambda(\mathcal J))=\zeta$ in every case.

\smallskip\noindent\textbf{Step 6: verify that the dimension is always $s$.}
When $\zeta=0$ or $\zeta=\infty$, the density alone does not determine
the dimension. We verify the hypotheses of Lemma \ref{thm4.6}
for all three cases.
For all large $k$, all three choices of target satisfy
$\max\{\tau_{k-1},\tau_k\}+e_k\leq C_1(1+\log(k+1))$,
where $C_1$ may depend on the fixed $\zeta$ when it is finite positive.
For $h\in[H_{k-1},H_k]$ we have
$\log h\geq\gamma_{k-1}\log q\geq(k-1)^2\log q$.
Increasing a constant to cover the initial stages, \eqref{(4.3)}
therefore gives
\begin{equation}\tag{4.5}\label{(4.5)}
                   \frac{M_{\Lambda(\mathcal J)}(h)}{h^s}
                                      \leq C\log h
\end{equation}
for all sufficiently large $h$.
For $\zeta=0$, Step 5 gives
$M_{\Lambda(\mathcal J)}(h_k)/h_k^s\geq1/(2\log_m(k+1))$.
For $0<\zeta<\infty$, this quotient is eventually at least
$\zeta/2$, and for $\zeta=\infty$ it is eventually at least $1$.
Since $1/(2\log_m(k+1))\to0$, the same lower bound thus holds
in all three cases for sufficiently large $k$. Consequently,
\[
 (\log h_k)\frac{M_{\Lambda(\mathcal J)}(h_k)}{h_k^s}
 \geq\frac{(k-1)^2\log q}{2\log_m(k+1)}\longrightarrow\infty.
\]
Thus the positive-limsup condition of Lemma \ref{thm4.6} holds in
all cases. That lemma proves $\dim_{\mathrm{Be}}\Lambda(\mathcal J)=s$.
\end{proof}

\begin{proof}[Proof of Theorem \ref{thm1.1}]
We combine the prescribed word construction with the spectral
extension, keeping the order of the choices explicit.

\smallskip\noindent\textbf{Step 1: normalize the digits and fix the targets.}
Lemma \ref{thm3.1} allows us to assume
$0\in L\subset\{0,\ldots,q-2\}$, without changing $\mu_{q,D}$ or
$\mathcal T$. Fix $0<s<r_0$ and $\zeta\in[0,+\infty]$.
Use the targets $\tau_k$ and errors $e_k$ of Theorem \ref{thm4.7}.
Also choose a recurrent enumeration $(a_k)$ of $\mathcal T_+$.
For example, if $\mathcal T_+=\{t_1,t_2,\ldots\}$, concatenate
$(t_1)$, $(t_1,t_2)$, $(t_1,t_2,t_3)$, and so on; for a finite set
repeat its entire list. Every multiplier then occurs infinitely often.

\smallskip\noindent\textbf{Step 2: perform the two constructions in alternating order.}
Initially take $m_0=0$ and $M_0=0$, and initialize the finite density
core as in Theorem \ref{thm4.7}. Suppose the choices through stage
$k-1$ have been made. At this point the integers $m_{k-1}$ and all
frequencies for the finitely many binary encodings through stage $k-1$
are known. In particular the maximum $M_{k-1}$ defined in
Theorem \ref{thm3.4} is known.
First choose an integer $v_k\geq1$ with
\[
                         a_kq^{-v_k}<\delta_{a_k}/4.
\]
This choice depends only on $a_k$ and $\delta_{a_k}$.

Apply the next stage of Lemma \ref{thm4.4} with the prescribed
$\tau_k,e_k$, requiring additionally $\alpha_k>m_{k-1}+3$ and
\[
                  a_kq^{-(\gamma_k+v_k)}M_{k-1}<\delta_{a_k}/4.
\]
The first requirement is possible because $\alpha_k$ can exceed
any specified finite bound; the second holds for sufficiently large
$\gamma_k$ because $M_{k-1}$ is already fixed. All other density
requirements are those in Theorem \ref{thm4.7}.
Put $m_k=\gamma_k+v_k$. After the density core is fixed through
$\gamma_k$, extend it by zero digits through $m_k$.
In the next density stage impose $\alpha_{k+1}>m_k+3$, ensuring that
these zero extensions remain exactly the allowed words there.
Thus
\[
 \mathcal J_{m_k}=\mathcal J_{\gamma_k}0^{m_k-\gamma_k},\qquad
 a_kq^{\gamma_k-m_k}<\delta_{a_k}/4,\qquad
 a_kq^{-m_k}M_{k-1}<\delta_{a_k}/4.
\]
These are precisely the size conditions for spectral stage $k$.
Carry out that stage for all its finite binary encodings using
Theorem \ref{thm3.4}. Its successive choices are finite in number,
so they produce a finite maximum $M_k$. Only now proceed to the
next density stage. This proves inductively that all stages can
be completed; no parameter is chosen using a future unknown maximum.

\smallskip\noindent\textbf{Step 3: obtain common spectra with the prescribed core.}
Theorem \ref{thm4.7} gives
\[
 \dim_{\mathrm{Be}}\Lambda(\mathcal J)=s,
 \qquad D_s^+(\Lambda(\mathcal J))=\zeta.
\]
Theorem \ref{thm3.4} supplies, for every $I\in\{1,2\}^{\mathbb N}$,
\[
 \Lambda(I)=\Lambda(\mathcal J)\cup E(I),\qquad
 t\Lambda(I)\text{ is a spectrum of }\mu\quad(t\in\mathcal T),
\]
where $E(I)$ is positive and $q$-lacunary. In the recursion, a core
word has carry zero and a core parent; induction gives
$\lambda_k(J,I|_k)=T_{m_k}(J)$ for every $J\in\mathcal J_{m_k}$.
Thus the same numerical core occurs for every binary encoding, as required.

\smallskip\noindent\textbf{Step 4: preserve the upper Beurling densities.}
For any $h>0$, inclusion and the union bound give
\[
 M_{\Lambda(\mathcal J)}(h)\leq M_{\Lambda(I)}(h)
 \leq M_{\Lambda(\mathcal J)}(h)+M_{E(I)}(h).
\]
Fix $u>0$ and divide by $h^u$. Lemma \ref{thm3.5} implies
$M_{E(I)}(h)/h^u\to0$, so the two other quotients have the same
limsup. If the core limsup is infinite, the lower inequality alone
already gives infinity for the union. Hence in the extended range
$[0,+\infty]$ we have
\[
                    D_u^+(\Lambda(I))=D_u^+(\Lambda(\mathcal J))
                                         \quad(u>0).
\]
Taking $u=s$ proves the required density, including zero and
infinity. Taking the infimum of the exponents for which this density
is zero proves the required Beurling dimension $s$.

\smallskip\noindent\textbf{Step 5: count the distinct spectra.}
Theorem \ref{thm3.4}(iii) proves that different infinite binary
sequences give different sets $\Lambda(I)$. The set of such sequences
has cardinality $2^{\aleph_0}$, so the required family has at least
that cardinality. For the reverse bound, every countable subset of
$\mathbb R$ can be listed in a sequence, and
\[
                 \#\mathbb R^{\mathbb N}
                 =(2^{\aleph_0})^{\aleph_0}=2^{\aleph_0}.
\]
There are therefore at most continuum many countable frequency sets,
and equality follows. All constructed frequencies are nonnegative
integers and zero is retained at every stage. This completes the proof.
\end{proof}

\begin{re}\label{re4.7}
The family has continuum cardinality, although every individual
spectrum is countable. 
\end{re}


\end{document}